\author{Chris Le Sueur}
\ead[url]{http://www.maths.bris.ac.uk/~cl7907/}
\address{School of Mathematics, \\University of East Anglia, \\Norwich Research Park, \\Norwich NR4 7TJ, \\United Kingdom}
\title{Determinacy of Refinements to the Difference Hierarchy of Co-analytic Sets}
\def\copyrightnotice{%
	 \begin{minipage}{\textwidth}
		 \footnotesize\itshape
		 Accepted August 2017.  This manuscript is made available under the Creative Commons CC-BY-NC-ND 4.0 license: \url{http://creativecommons.org/licenses/by-nc-nd/4.0/}%
	 \end{minipage}
}
\def\ps@pprintTitle{%
 \let\@oddhead\@empty
 \let\@evenhead\@empty
 \def\@oddfoot{\copyrightnotice}
 \let\@evenfoot\@oddfoot}
\newcommand{\ZFC}{\ensuremath{\mathsf{ZFC}}\xspace}
\newcommand{\KP}{\ensuremath{\mathsf{KP}}\xspace}
\newcommand{\Det}{\ensuremath{\operatorname{Det}}}
\newcommand{\hyp}{\ensuremath{\operatorname{-}}}
\newcommand{\ot}{\ensuremath{\operatorname{ot}}}
\newcommand{\ca}{\ensuremath{{\Pi}^1_1}}
\newcommand{\plays}[1]{\ensuremath{\lceil #1 \rceil}}
\newcommand{\concat}{\ensuremath{\widehat{\:\:\:}}}
\newcommand{\os}{\ensuremath{\omega^2\hyp\Pi^1_1}}
\newcommand{\bs}{\ensuremath{\omega^\omega}}
\renewcommand{\P}{\ensuremath{\mathbb{P}}\xspace}
\newcommand{\wt}{\widetilde}
\newcommand{\skp}{\ensuremath{\KP_1}\xspace}
\newcommand{\nkp}{\ensuremath{\KP_n}\xspace}
\newcommand{\dom}{\ensuremath{\operatorname{dom}}}
\newcommand{\rk}{\operatorname{rk}}
\newcommand{\ran}{\text{ran}}
\newcommand{\texorpdfstring}[2]{#1}
\newcommand{\Trans}{\operatorname{Trans}}
\newcommand{\csym}{\ensuremath{\dot{\vec c}}}
\newcommand{\LF}{\ensuremath{\mathcal L_{F_\lambda}}}
\newtheorem{thm}{Theorem}
\newtheorem*{thm*}{Theorem}
\newtheorem{lem}[thm]{Lemma}
\newtheorem{cor}[thm]{Corollary}
\newtheorem{prop}[thm]{Proposition}
\newtheorem*{fact*}{Fact}
\newtheorem*{claim*}{Claim}
\theoremstyle{definition}
\newtheorem{dfn}[thm]{Definition}
\newtheorem*{dfn*}{Definition}
\numberwithin{thm}{section}
\theoremstyle{remark}
\newtheorem*{rem}{Remark}
\numberwithin{claim}{thm}
\begin{document}
\begin{abstract}
	In this paper we develop a technique for proving determinacy of classes of the form $\omega^2\hyp\Pi^1_1+\Gamma$ (a refinement of the difference hierarchy on $\Pi^1_1$ lying between $\os$ and $(\omega^2+1)\hyp\Pi^1_1$) from weak principles, establishing upper bounds for the determinacy-strength of the classes $\os+\Sigma^0_\alpha$ for all computable $\alpha$ and of $\os+\Delta^1_1$.
	This bridges the gap between previously known hypotheses implying determinacy in this region.
\end{abstract}
\begin{keyword}
	Determinacy \sep descriptive set theory
	\MSC[2010] 03E60 \sep 03E30
\end{keyword}
\def\sep{\ensuremath{\mathsf{Sep}}}

\renewenvironment{abstract}{\comment}{\endcomment}
\def\sep{\ensuremath{\mathsf{Sep}}}

\maketitle

\section{Introduction}
\label{sec:Introduction}

We work towards proving the following theorem (the relevant definitions can be found in the next section):
\begin{thm}
	\label{thm:main}
	If there exists a non-trivial mouse $\mathcal M$ with measurable cardinal $\kappa$ satisfying the theory $T$, then $\Det(\omega^2\hyp\ca+\Gamma)$ for the following combinations of $T$ and $\Gamma$:
	\begin{enumerate}
		\item $T=\text{``cleverness + there exists a clever mouse,''}$ $\Gamma=\Sigma^0_1$;
		\item $T=\KP+\Sigma_1\hyp\sep$, $\Gamma=\Sigma^0_2$;
		\item $T=\Sigma_2\hyp\KP+\Sigma_2\hyp\sep$, $\Gamma=\Sigma^0_3$;
		\item $T=\Sigma_{n+1}\hyp\KP+\Sigma_{n+1}\hyp\sep$, $\Gamma=n\hyp\Pi^0_3$;
		\item $T=\mathsf{ZFC}^-+\mathcal{P}^\alpha(\kappa)$ exists, $\Gamma=\Sigma^0_{1+\alpha+3}$ for computable $\alpha$;
		\item $T=\ZFC$, $\Gamma=\Delta^1_1$.
	\end{enumerate}
\end{thm}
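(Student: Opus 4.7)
The plan is to handle all six rows of Theorem~\ref{thm:main} through a single auxiliary-game construction, with the choice of theory $T$ calibrating how much complexity can be absorbed into the payoff. Given $A \in \omega^2\hyp\ca+\Gamma$, I would first fix a normal form exhibiting $A$ as a Boolean combination of $\omega^2$ many $\ca$ sets with a single $\Gamma$-component. Using Martin's tree representation of $\ca$, each $\ca$ constituent is coded by a tree on $\omega\times\kappa$ inside the mouse $\mathcal M$, so that the measurable $\kappa$ and its indiscernibles take over the role that $\omega_1$ plays in the classical Martin--Harrington transfer.

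Next, I would define an auxiliary game $G^*_A$ in which, alongside the original real, the players produce ordinal side-moves from $\kappa^{<\omega}\cap\mathcal M$ that certify, from $\mathcal M$'s point of view, the evolving status of the $\omega^2$-many $\ca$-trees. The $\omega^2$ exponent would be handled by a two-level bookkeeping: an outer $\omega$-index selecting which $\ca$ constituent is currently \emph{active} and an inner $\omega$-index tracking approximations to a branch through the associated tree. The goal is to arrange the payoff of $G^*_A$ so that the $\omega^2\hyp\ca$ portion of membership in $A$ is decided by a clopen (or at worst open) condition on the ordinal side-moves, leaving a residual complexity exactly that of the $\Gamma$-factor.

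The theorem then splits into two tasks. First, $\Det(G^*_A)$ follows from classical determinacy transfer results applied to the pointclass $\Gamma$, since $G^*_A$ has payoff essentially in $\Gamma$. Second, the winning quasi-strategy in the auxiliary game must be projected to a winning strategy in the original game; this is where iterability of $\mathcal M$ intervenes, through the standard device of iterating the measure on $\kappa$ to align the ordinal side-moves with genuine indiscernibles of the ambient universe. The theory $T$ controls how much of this projection is definable in the relevant sense and how much closure is available inside $\mathcal M$ for the transformations underlying the auxiliary construction.

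The main obstacle I anticipate is the tight pairing of $T$ with $\Gamma$ that the theorem asserts. Cleverness is only enough for $\Sigma^0_1$ because it essentially supplies one admissible level above $\kappa$; the successive jumps to $\Sigma^0_2$, $\Sigma^0_3$, and $n\hyp\Pi^0_3$ reflect the extra reflection and $\Sigma_n$-separation required to run the auxiliary bookkeeping as $\Gamma$ ascends the finite Borel hierarchy; the $\mathcal P^\alpha(\kappa)$ rows demand genuine iterated power-set to absorb arbitrary computable $\Sigma^0_{1+\alpha+3}$ payoffs; and the full $\ZFC$ row delivers $\Delta^1_1$ by allowing unrestricted transfinite recursion on the labels. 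I would therefore organise the proof as a uniform lemma producing $G^*_A$, followed by row-by-row verifications that $\mathcal M$ satisfies precisely enough to execute that construction internally and to witness $\Gamma$-determinacy of the auxiliary payoff, with the final strategy-transfer step identical across rows.
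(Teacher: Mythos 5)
Your high-level architecture is the right one and matches the paper's: an auxiliary game with ordinal side-moves certifying the $\ca$ constituents, indiscernibles obtained by iterating the measure of $\mathcal M$, and a projection of the auxiliary strategy back to the original game. This is exactly the shape of Theorem \ref{thm:ind-to-det}. But there is a genuine gap at the step where you dispose of the determinacy of the auxiliary game by saying it ``follows from classical determinacy transfer results applied to the pointclass $\Gamma$, since $G^*_A$ has payoff essentially in $\Gamma$.'' It does not. The residual payoff is not a $\Gamma$ set of reals but a $\wt\Gamma$ set in $\plays{T^*}$ for an \emph{uncountable} game tree $T^*$ (the side-moves are ordinals), which is why the paper has to redevelop effective descriptive set theory for uncountable spaces in Section \ref{sub:Effective Descriptive Set Theory in Uncountable Spaces} before any determinacy argument can even be stated. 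More importantly, the transfer back to the original game works by integrating strategies over the indiscernibles, and for indiscernibility to apply to the strategy at all, the strategy must be $\Sigma_n$-definable over the \emph{minimal} transitive model of $T$ containing the relevant $\omega$-sequence of indiscernibles (a forcing extension of a hull of an iterate, not $\mathcal M$ itself). Classical $\Det(\Gamma)$ in $V$ supplies neither the existence nor the definability of such a strategy in these very weak models.

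This is precisely where the pairing of $T$ with $\Gamma$ comes from, and it is the technical bulk of the paper: each row requires re-running a determinacy proof for $\wt\Gamma$ \emph{inside} a model of $T$ --- Welch's clever-mouse argument for $\wt\Sigma^0_1\wedge\wt\Pi^0_1$, the $\Sigma^0_2$ and $\Sigma^0_3$ unravellings under $\Sigma_1$- and $\Sigma_2$-separation, the Montalb\'an--Shore argument for $n\hyp\wt\Pi^0_3$ under $\KP_{n+1}$ with a careful count of the L\'evy complexity of the auxiliary predicates (Proposition \ref{prop:properties-complexities}), and Martin-style Borel unravelling under iterated power sets for $\Sigma^0_{1+\alpha+3}$. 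Your closing paragraph gestures at ``row-by-row verifications,'' which is the right instinct, but it contradicts the earlier appeal to classical transfer; the verification is not that $\mathcal M$ is closed under the auxiliary construction, but that the minimal indiscernible-generated model of $T$ proves $\Det(\wt\Gamma)$ with a definable strategy. Without that, the integration step has nothing to integrate.
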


This is thus an extension of $\omega^2\hyp\Pi^1_1$ determinacy, without requiring all the strength of $0^\dagger$ required to prove determinacy of $(\omega^2+1)\hyp\Pi^1_1$.
The story of $\Det(\omega^2\hyp\Pi^1_1)$ starts with Martin proving in \cite{Martin70} that the existence of a measurable cardinal implies the determinacy of $\mathbf\Pi^1_1$.
The proof uses the measure to ``integrate'' many strategies together in a technique that proved very fruitful and which makes the core of the present paper.
Analysis of the proof allows the result to be broken down to the following:

\begin{thm*}[Martin]~
	\[
		\forall x\in\omega^\omega (x^\sharp\text{ exists} \rightarrow \Det(\Pi^1_1(x))).
	\]
\end{thm*}

Hence $0^\sharp$ implies the determinacy of the lightface co-analytic sets.
Here the role of the measure is seen through the lens of indiscernibility, and this is the form in which our determinacy proof will be.

It was then shown in \cite{Friedman71} that actually $0^\sharp$ implies $\Det(3\hyp\Pi^1_1)$, the third level of the difference hierarchy.
Eventually the full strength of $0^\sharp$ came out in a proof by Martin (available in Martin's unpublished book manuscript \cite{MartinUP} or an account by DuBose \cite{DuBose90}) that

\begin{thm*}[Martin]
	\[
		0^\sharp\text{ exists} \leftrightarrow \Det\left(\textstyle{\bigcup_{\alpha<\omega^2}}\alpha\hyp\Pi^1_1\right).
	\]
\end{thm*}

The much stronger principle of $0^\dagger$ was found to imply $\Det((\omega^2+1)\hyp\Pi^1_1)$, and indeed this is an exact equivalence.
Intermediate results were sought; in \cite{Martin90} Martin proves $\Det(\omega^2\hyp\mathbf\ca)$ from a measurable cardinal and, providing inspiration for our current results, proves $\Det(\Delta(\omega^2+1)\hyp\mathbf\ca)$ and hence $\Det(\omega^2\hyp\mathbf\ca+\mathbf\Delta^1_1)$ from the same hypothesis.
Nonetheless a weaker hypothesis was sought and found in \cite{Welch96}, the paper on which this is based:
\begin{thm*}[Welch]
	There exists a \emph{clever mouse} iff $\Det(\omega^2\hyp\ca)$.
\end{thm*}
A clever mouse is a certain type of iterable model.
Principles such as $0^\sharp$ and $0^\dagger$ can be viewed in terms of mice, as well, so this result in some sense exists on the same continuum.

The difference hierarchy is already a refinement of the projective hierarchy, here, so we consider further refinements; if $\Gamma\subseteq\ca$ is a pointclass, $\omega^2\hyp\ca+\Gamma$ is viewed as $(\omega^2+1)\hyp\ca$ with the final set in the $\omega^2+1$-sequence being constrained to being $\Gamma$.
Using the methods in \cite{Welch96}, we find that such results are closely connected to the determinacy of a class related to $\Gamma$ which we call $\wt\Gamma$.
This is the analogue of $\Gamma$ in a larger space, arising from the auxiliary game used in Section \ref{sec:Determinacy from Indiscernibility} , and in Section \ref{sub:Effective Descriptive Set Theory in Uncountable Spaces} we will spend some time developing the theory of this space.

The way we will prove Theorem \ref{thm:main} is via the following:

\begin{thm}
	\label{thm:ind-to-det}
	Let $\Gamma$ be an arithmetic pointclass.
	Suppose there exists a class $C$ of $\Sigma_n$ generating indiscernibles for a theory $T$ such that for any $\vec c\in[C]^\omega$, any $\wt\Gamma$ game has a $\Sigma_n$-definable winning strategy in the smallest transitive model of $T$ with $\vec c\in T$.
	Then $\Det(\omega^2\hyp\ca+\Gamma)$.
\end{thm}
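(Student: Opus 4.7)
The plan is to adapt Martin's auxiliary-game technique, following the template of the $0^\sharp$-style proof of $\Pi^1_1$ determinacy via indiscernibles. Given a game $G$ with payoff $A\in\omega^2\hyp\ca+\Gamma$, first unpack $A$ into its canonical presentation as an alternating union of $\omega^2$ many $\ca$ sets together with a final $\Gamma$ tail. I would then construct an auxiliary game $G^*$ in which, beyond integer moves, one of the players also plays ordinals that are required to be strictly increasing indiscernibles drawn from $C$. These ordinals act as witnesses for the $\omega^2$ many $\ca$ components: each $\ca$ set is given by well-foundedness of a recursively presented tree, and the ordinal-player supplies a putative ranking of the relevant branch. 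The payoff of $G^*$ is arranged so that dishonest ordinal witnesses lose automatically, while an honest play reduces to the $\Gamma$ residue lifted to the space of integer-plus-ordinal sequences — that is, to a set in $\wt\Gamma$ in the sense of Section \ref{sub:Effective Descriptive Set Theory in Uncountable Spaces}.

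Fixing $\vec c\in[C]^\omega$, the hypothesis then supplies a $\Sigma_n$-definable winning strategy $\sigma^*$ for $G^*$ in the smallest transitive model of $T$ containing $\vec c$. Because $C$ is a class of $\Sigma_n$-generating indiscernibles, $\sigma^*$ depends on its ordinal inputs only through their order-type, so its behaviour is uniform across choices of $\vec c$. I would then transfer $\sigma^*$ to a strategy $\sigma$ in $G$ by splitting on which side $\sigma^*$ favours. If the ordinal-playing side wins $G^*$, the transfer is essentially by projection: that player silently supplies indiscernibles from $C$ while playing $\sigma^*$'s integer responses, and the payoff in $G$ is correct because the witnesses supplied are guaranteed to be valid by the definition of $G^*$. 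If the other side wins $G^*$, I would use the Martin-style integration argument: given any play against $\sigma$ in $G$, lift it to a play of $G^*$ by appending indiscernibles from $C$, and invoke $\Sigma_n$-indiscernibility to show that the outcome of the lifted play is independent of the particular lift chosen; hence $\sigma$ wins $G$.

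The main obstacle I expect is the combinatorial bookkeeping required to encode all $\omega^2$ many $\ca$ components by a single sequence of indiscernible moves and to verify that the resulting payoff genuinely sits in $\wt\Gamma$ and not some larger class — this is precisely where the \emph{arithmetic} hypothesis on $\Gamma$ is essential, since we need the auxiliary payoff to absorb the extra quantifiers coming from ordinal witnesses without climbing the projective hierarchy. A secondary delicate point is the integration step on the projection side of the transfer: matching the $\Sigma_n$-definability of $\sigma^*$ against the $\Sigma_n$-indiscernibility of $C$ to conclude that the choice of witnesses is immaterial, and that $\sigma$ is therefore a genuine winning strategy in $G$ rather than merely a partial function depending on the auxiliary data.
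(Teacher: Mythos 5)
Your overall architecture --- an auxiliary game $G^*$ with ordinal moves witnessing the $\ca$ components, a payoff landing in $\wt\Gamma$, and a transfer back to $G$ by projection on one side and Martin-style integration on the other --- is indeed the route the paper takes. But there is a genuine gap in how you use the hypothesis. The smallest transitive model of $T$ containing a fixed $\vec c\in[C]^\omega$ is a countable structure, so the strategy $\sigma^*$ it provides can only respond to, and produce, ordinal moves lying in that model; meanwhile an honest run of $G^*$ must be able to rank trees of arbitrary countable rank, for arbitrary reals produced by the opponent in $V$. No single $\sigma^*$ is therefore a winning strategy for the auxiliary game you actually need. The content of the theorem is the amalgamation of the whole family $\{\sigma^*_{\vec c} : \vec c\in[C]^\omega\}$ into one total response function: given a finite position whose ordinal coordinates lie in $C$, complete them to some $\vec c$, consult $\sigma^*_{\vec c}$, and use $\Sigma_n$-indiscernibility together with the uniform $\Sigma_n$-definition of the strategies to check that the answer is independent of the completion. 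You assert the uniformity (``behaviour is uniform across choices of $\vec c$'') but never confront the fact that countably-based strategies must be glued together to cover an uncountable game; this gluing, and the verification that the glued object is still winning, is where the generating-indiscernible property is actually spent.

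Relatedly, requiring the ordinal moves of $G^*$ to be ``indiscernibles drawn from $C$'' makes the game tree an object that no model $\mathcal A_T[\vec c]$ contains or can define ($C$ is a class of indiscernibles over these models, not an element of any of them), so the hypothesis does not literally apply to that game. The moves should range over the ordinals of the model, with $C$ entering only at the transfer stage: there one needs that any honest system of rank witnesses arising in $V$ can be replaced, order-isomorphically, by members of $C$ compatible with what the integrated strategy has already been fed --- which requires $C$ to be sufficiently rich (unbounded in $\omega_1$), another point your sketch does not secure. Finally, a correction of emphasis: the arithmetic hypothesis on $\Gamma$ is not what stops the payoff ``climbing the projective hierarchy'' (the honesty conditions are closed in the product space regardless); it is what makes the lift $\wt\Gamma$ to the uncountable space well behaved, so that the auxiliary payoff lands exactly in the class whose determinacy the hypothesis supplies.
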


We will define exactly what generating indiscernibles are in Section \ref{sec:Obtaining Indiscernibles}, where we will also show their existence, and Theorem \ref{thm:ind-to-det} will be proved in Section \ref{sec:Determinacy from Indiscernibility}, where $\wt\Gamma$ will be defined implicitly.
Finally in Section \ref{sec:Individual Determinacy Proofs} we will prove the determinacy of $\wt\Gamma$ for each $\Gamma$ mentioned in \ref{thm:main} in the relevant model.

Section \ref{sub:Dual Classes in the Difference Hierarchy} contains definitions of the difference hierarchy and our refinements to it, as well as some small lemmas that help in establishing the determinacy of dual classes.
Readers familiar with the difference hierarchy may wish to skip that section except for the main Definition \ref{dfn:refined-diff-hierarchy}.
Section \ref{sub:Effective Descriptive Set Theory in Uncountable Spaces} contains essential definitions required to complete the determinacy proofs, but the material closely mirrors the usual effective descriptive set theory.
Sections \ref{sec:Obtaining Indiscernibles} and \ref{sec:Determinacy from Indiscernibility} are independent of one another and can be read in either order, with the exception of Definition \ref{dfn:generating-indiscernibles}.

\section*{Acknowledgements}
The author was supported by a doctoral training grant of the Engineering and Physical Sciences Research Council.
The author would also like to thank his supervisor Philip Welch for many helpful discussions.

\section{Preliminaries}
\label{sec:Definitions and notation}
\label{sec:Preliminaries}

Our set theoretic notation is standard, and we follow \cite{MartinUP} for our determinacy notation.
Thus a game is specified by a \emph{game-tree} $T$, usually $\omega^{<\omega}$, and the payoff set $A\subseteq\plays{T}$.
$\plays T$ is the set of all possible plays, and is equal to $[T]$, the set of branches, if $T$ has no terminal-nodes, which for us will always be the case.
Determinacy for a class of sets $\Gamma$ is denoted $\Det(\Gamma)$.
The Borel hierarchy and co-analytic sets in the space $[T]$ are defined as usual as in \cite{MoschovakisDST}, but since we will be dealing with small models without all of the reals we will need to use the lightface counterparts of these classes.
In Section \ref{sub:Effective Descriptive Set Theory in Uncountable Spaces} this will be generalised to uncountable $T$; once the basic definitions are in order, the results here work the same as in the classical case.

We require familiarity with admissible structures and $L$, where we follow the notation of \cite{Devlin}.
We will also use the Jensen hierarchy $J_\alpha$, but since we mostly deal with admissible structures this can be thought of as $L_\alpha$.
\nkp denotes the axioms of Kripke-Platek set theory augmented by the schemes of $\Sigma_n$ Collection and Separation.
We make extensive use of structures of the form $\langle L_\alpha[A],\in,A\rangle$ for some predicate $A$ (usually a countable sequence of ordinals.)
If we say a structure of this form is admissible, or a model of \nkp, we mean a model in said theory in the language with a predicate for $A$, adjusting the L\'evy hierarchy appropriately.
Most results from the usual $L_\alpha$ hierarchy generalise in the obvious way; in particular we will need $\Sigma_n$ Skolem functions for $n>1$.
Normally this would require the full fine structure theory, but we will always be in a model of at least $\Sigma_{n-1}$ admissibility.
Thus we will freely make use of the fact that ``the $M$-least set $x$ such that $\varphi(x)$'' for $\varphi$ a $\Sigma_n$ formula is $\Sigma_n$ over $M$.

While the statement of Theorem \ref{thm:main} is in terms of mice, we won't need much mouse theory at all, since the mice we use will be models of enough set theory to obviate the need for much of the fine structure theory.
The important part of being a mouse is the property of iterability, i.e. having wellfounded iterated ultrapowers.
Thus when we talk of $M$ being a mouse the important features that we will use are its transitivity and wellfounded ultrapowers, and indeed the hypotheses of the main theorem could just as well be stated in terms of the existence of any transitive, iterable model of enough set theory.
The full theory can be found in \cite{Zeman02}, but we will only need the theory for mice with a single measure, which can be found (albeit with older notation) in \cite{DoddJensen}.

Also touched upon in section \ref{sec:Obtaining Indiscernibles} is the canonical mouse order, $<_*$, the details of which are not important for this paper.

For the first result of the main theorem we refer to the cleverness property defined in \cite{Welch96} that was the key to getting that paper's main result.
Essentially this is a version of Rowbottom's theorem for classes $\Sigma_1$-definable over a model, a principle which we get for free in models of $\Sigma_1$ (or more) separation.
The definition uses the concept of a $Q$-structure, defined at 1.8 in \cite{Welch96}.
Let $M$ be a mouse in the old sense of \cite{DoddJensen}, then:
\begin{dfn}
	Let $M\vDash\text{``$F$ is a normal measure on $\kappa$''}$, where $F$ is, in $V$, the closed-unbounded filter on $\kappa$.
	Then the \emph{$Q$-structure of $M$ at $\kappa$} is:
	\[
		Q^M_\kappa = \langle J^F_\theta,\in,F\cap J^F_\theta\rangle
	\]
	where $\theta$ is largest such that:
	\[
		J^F_\theta\vDash\text{``$F$ is normal, and the club filter on $\kappa$.''}
	\]
\end{dfn}
We will not make extensive use of this material and haven't changed much from \cite{Welch96}, so we omit the majority of the details, commenting rather on the changes where they exist.

The next two subsections introduce more notation as well as the required generalisations.

\subsection{The Difference Hierarchy and Refinements}
\label{sub:Dual Classes in the Difference Hierarchy}

\let\oldbibliography\bibliography
\def\bibliography#1{\relax}
\let\oldsection\section
\let\oldsubsection\subsection
\let\section\subsubsection
\let\subsection\subsubsection

The difference hierarchy was developed by Hausdorff, and a discussion can be found in \cite{Hausdorff44}.
We first cover the basic case, then develop the refinement we are considering and prove some basic lemmas.

Following the nomenclature of \cite{MoschovakisDST}, let us fix $\Gamma$ to be an \emph{adequate} pointclass closed under countable intersection in some ambient Polish space $\mathcal 
X$, for instance in the intended case, the class of $\ca$ sets in Baire space, $\bs$. Then, if $\alpha$ is a countable ordinal we denote the 
$\alpha$th level of the \emph{difference hierarchy} on $\Gamma$ by $\alpha\hyp\Gamma$: a pointset $A$ is in 
$\alpha\hyp\Gamma$ iff there is a sequence $\langle A_\xi\mid\xi\leq\alpha\rangle$, with each $A_\xi\in\Gamma$ and 
$A_\alpha=\varnothing$ such that:
\[
	x \in A \leftrightarrow \text{the least $\xi$ such that $x\notin A_\xi$ is odd,}
\]
in which case we say the sequence $\langle A_\xi\mid\xi\leq\alpha\rangle$ \emph{witnesses} that $A\in\alpha\hyp\Gamma$. Since $\Gamma$ is 
assumed to be closed under countable intersections we can take the sequence to be downwards-closed, that is 
$A_{\alpha}\supseteq A_\beta$ if $\alpha<\beta$, by rewriting $A_\beta$ as $\bigcap_{\alpha\leq\beta} A_\alpha$ if 
necessary. The definition of the difference hierarchy ensures that intersections with previous members of the sequence do 
not affect the resulting set $A$.

These pointclasses serve as a further stratification in the Borel hierarchy; each $\alpha\hyp\mathbf\Pi^0_n$ is strictly between 
$\mathbf\Pi^0_n$ and $\mathbf\Delta^0_{n+1}$ as long as $\alpha$ is countable, by a result of Hausdorff.
Note while we can consider pointclasses $\Gamma$ that are not closed under countable intersections, say $\mathbf\Sigma^0_n$, then 
already $\omega\hyp\mathbf\Sigma^0_n$ captures all of the $\mathbf\Pi^0_{n+1}$ sets, so this is not as interesting, and the results below 
become trivial.

We can make the hierarchy even finer by restricting the final set in the witnessing sequence. We adopt the following 
notation for this:
\begin{dfn}
	\label{dfn:refined-diff-hierarchy}
	If $\Lambda\subseteq\Gamma$ is a pointclass, we say $A\in(\alpha\hyp\Gamma)+\Lambda$ iff there is a sequence $\langle 
	A_\xi\mid\xi\leq\alpha+1\rangle$ witnessing that $A\in(\alpha+1)\hyp\Gamma$, with $A_\alpha\in\Lambda$. Note that 
	$A_{\alpha+1}$ is still $\varnothing$.
\end{dfn}
With this notation it is clear that $\alpha\hyp\Gamma+\Gamma$ is just $(\alpha+1)\hyp\Gamma$, but for general $\Lambda$ we 
end up with a different pointclass.


We now present a couple of elementary results about these refinements to the difference hierarchy.
Let $\neg\Gamma$ denote the dual class of the pointclass $\Gamma$.

\begin{lem}
	If $\lambda$ is a countable limit ordinal, then the dual class, $\neg((\lambda\hyp\Gamma)+\Lambda)=(\lambda\hyp\Gamma)+\neg\Lambda$.
\end{lem}
\begin{proof}
	Let $\langle A_\alpha\mid\alpha\leq\lambda+1\rangle$ witness that $A^c\in(\lambda\hyp\Gamma)+\Lambda$, and we want to 
	find a sequence witnessing that $A\in(\lambda\hyp\Gamma)+\neg\Lambda$. Define the sequence $\langle 
	B_\alpha\mid\alpha\leq\lambda+1\rangle$ as follows:
	\begin{align*}
		B_\eta &= \mathcal X, && \text{when $\eta<\lambda$ is zero or a limit ordinal}\\
		B_{\alpha+1} &= A_{\alpha}, && \text{when $\alpha\leq\lambda$}\\
		B_{\lambda} &= (A_{\lambda})^c,\\
		B_{\lambda+1} &= \varnothing.
	\end{align*}
	So assuming $\lambda>\omega$, the sequence looks like:
	\[
		\langle \mathcal X, A_1, A_2, \ldots, \mathcal X, A_{\omega}, \ldots, (A_\lambda)^c, \varnothing\rangle,
	\]
	with the whole space $\mathcal X$ inserted at zero and each limit position below $\lambda$. Now, this sequence is a 
	witness that some set is in $(\lambda\hyp\Gamma)+\neg\Lambda$, since it is of the correct length and 
	$B_\lambda\in\neg\Lambda$. Denote this set $B$ and we show $A^c=B$.
	
	Let $x\in\mathcal X$, then let $\alpha$ be the least ordinal such that $x\notin A_\alpha$. By construction, the least 
	$\beta$ such that $x\notin B_\beta$ is $\alpha+1$, unless $\alpha=\lambda+1$ in which case $\beta=\alpha-1$. In either 
	case $x\in A^c\leftrightarrow x\in B$.
	
	We've shown that $\neg((\lambda\hyp\Gamma)+\Lambda)\subseteq(\lambda\hyp\Gamma)+\neg\Lambda$, but note that we can apply the 
	same argument to get that $\neg((\lambda\hyp\Gamma)+\neg\Lambda)\subseteq(\lambda\hyp\Gamma)+\Lambda$, since 
	$\neg\neg A=A$.  Hence since taking dual classes preserves subsets, 
	$((\lambda\hyp\Gamma)+\neg\Lambda)\subseteq\neg((\lambda\hyp\Gamma)+\Lambda)$.
\end{proof}

For a pointclass $\Gamma$, $\Delta(\Gamma)$ denotes the self-dual pointclass, $\Gamma\cap\neg\Gamma$.

\begin{cor}
	If $\lambda$ is a countable limit ordinal, then $\lambda\hyp\Gamma+\Delta(\Gamma)\subseteq\Delta((\lambda+1)\hyp\Gamma)$.
\end{cor}
\begin{proof}
	This follows immediately from the previous lemma:
	\begin{align*}
		\lambda\hyp\Gamma+\Delta(\Gamma) = \lambda\hyp\Gamma+\neg\Delta(\Gamma)&=\neg(\lambda\hyp\Gamma+\Delta(\Gamma))
		\subseteq \neg((\lambda+1)\hyp\Gamma).
	\end{align*}
\end{proof}

Together these results will help extend the pattern down in the Borel hierarchy, that if we know $\Det(\Sigma^0_n)$, say, we get the dual class, $\Det(\Pi^0_n)$, for free.
Hence in the cases we are considering, having proved $\Det(\os+\Sigma^0_2)$, we will know immediately that $\Det(\os+\Pi^0_2)$, and indeed $\Det(\os+\Delta^0_2)$ hold.
\let\bibliography\oldbibliography
\let\section\oldsection
\let\subsection\oldsubsection

\subsection{Effective Descriptive Set Theory with Uncountable Spaces}
\label{sub:Effective Descriptive Set Theory in Uncountable Spaces}

\let\oldbibliography\bibliography
\def\bibliography#1{\relax}
\let\oldsection\section
\let\oldsubsection\subsection
\let\section\subsubsection
\let\subsection\subsubsection
The auxiliary game we construct to prove $\Det(\os+\Gamma)$ has a payoff set in the space $(\omega\times\aleph_\omega)^\omega$.
While it's not hard to see that such a set will be in the boldface counterpart of $\Gamma$ in this space, we will need a closer analysis than this.

Firstly, we need to know that the payoff set is at least definable over the weak models for which we have indiscernibles, and secondly proving determinacy in these models requires tools from effective descriptive set theory.
Effective descriptive set theory is defined in terms of computable functions on $\omega$, which will not suffice when working with an uncountable space.
We therefore need to generalise the theory and reprove several basic results in the uncountable context.

Let $T^*$ be the tree $F^{<\omega}$ for $F=\omega\times\aleph_\omega$.
This will be the tree for the auxiliary game.

\begin{dfn}[Generalised Recursive Pointclasses]
	\label{dfn:generalised-recursive}
	Define the following structure, the analogue of the hereditarily finite sets for classical recursion theory:
	\[
		\mathcal H=\langle L_{\aleph_\omega}[\langle\aleph_i\mid i<\omega\rangle],\in,\langle\aleph_i\mid 
		i<\omega\rangle\rangle.
	\]
	Fix a predicate $R(x^*,a)\subseteq [T^*]\times F$ (although the below definitions generalise trivially to predicates on $[T^*]^n\times F^m\times\omega^l$).
	Then $R$ is \emph{generalised-semi-recursive} if there is a $\Sigma_1(\mathcal H)$-definable set $X\subseteq T^*\times F$ such that
	\[
		R(x^*,a)\iff\exists m \langle x^*\upharpoonright m,a\rangle\in X.
	\]
	In other words, if $R$ is the $\Sigma_1(\mathcal H)$-union of basic open subsets of $[T^*]\times F$.
	$R$ is called \emph{generalised-recursive} if its complement is also of this form.
\end{dfn}
\begin{rem}
	Compare this with $HF$, over which the computably enumerable relations on $\omega$ are $\Sigma_1$, or to $\alpha$-recursion theory which considers subsets of $\alpha$.
	This can be seen as extending $\alpha$-recursion theory to subsets of $\alpha^\omega$, where we in particular take $\alpha=\aleph_\omega$.

	A coding of $(\aleph_\omega)^{<\omega}$ into $\aleph_\omega$ will allow us to talk about generalised-recursive relations on $T^*$ itself.
	Since $\mathcal H$ is closed under G\"odel pairing, the G\"odel pairing function $G$ on $\aleph_\omega$ is total and hence $\Delta_1^\mathcal H$, allowing consideration of such relations.
\end{rem}

\begin{dfn}[Generalised Kleene Pointclasses]
	Then for a predicate $P$ (which may be a subset of $[T^*]\times F^m\times\omega^l$ in general, but which we write below as a subset of $[T^*]\times F$ for clarity):
	\begin{enumerate}
		\item $P$ is $\wt\Sigma^0_1$ iff $P$ is generalised-semi-recursive;
		\item $P$ is $\wt\Sigma^0_{n+1}$ iff there is a $\wt\Pi^0_n$ predicate $R\subseteq [T^*]\times F\times\omega$ such that $P(x^*,a)\iff \exists b\in 
			\omega(R(x^*,a,b))$;
		\item $P$ is $\wt\Pi^0_n$ iff $\neg P$ is $\wt\Sigma^0_n$;
		\item $P$ is $\wt\Delta^0_n$ iff it is $\wt\Sigma^0_n$ and $\wt\Pi^0_n$.
	\end{enumerate}
	We then define the generalised analytic sets for $X\subseteq[T^*]\times F$:
	\[
		X\in\wt\Sigma^1_1 \iff \exists Y\in\wt\Pi^0_1(x\in X\leftrightarrow \exists y\in[T^*](\langle 
		x,y\rangle\in Y)).
	\]
	The $\wt\Sigma^1_n$, $\wt\Pi^1_n$ and $\wt\Delta^1_n$ sets follow as usual.
\end{dfn}

\begin{rem}
	The notion of computability is still the only difference with the usual lightface hierarchy; the subsequent levels are still built by (generalised) recursive unions, and complementation.
	Note that we take countable unions so as to align this hierarchy with the usual Borel hierarchy.

\end{rem}

\begin{lem}
	\label{lem:closed-under-recursive-functions}
	Suppose $P$ is $\wt\Sigma^0_n$ or $\wt\Pi^0_n$ and $f:[T^*]\to[T^*]$ is such that the relation $G\subseteq[T^*]\times T^*$ given by $G(a,p)\iff p \subseteq f(a)$.
	Then the set $f^{-1}`` P$ is also $\wt\Sigma^0_n$ or $\wt\Pi^0_n$, respectively.
\end{lem}
\begin{proof}
	First suppose $P$ is $\wt\Sigma^0_1$ and consider the set $f^{-1}`` P = \{a\mid f(a)\in P\}$.
	By our assumption on $f$ there is a $\Sigma_1(\mathcal H)$ set C such that
	\[
		p\subseteq f(a) \iff \exists m(\langle a\upharpoonright m, p\rangle\in C).
	\]
	Let $\bar C$ be the $\Sigma_1(\mathcal H)$ set $\{ q\mid\exists p\in P(\langle q,p\rangle\in C)\}$, and note that:
	\begin{align*}
		\exists m(a\upharpoonright m\in \bar C)
			&\iff \exists m(\exists p\in P(p \subseteq f(a)\upharpoonright m)\\
			&\iff \exists m(f(a)\upharpoonright m\in P)\\
			&\iff a\in f^{-1}``P,
	\end{align*}
	which is $\wt\Sigma^0_1$.

	If the result holds for $\wt\Sigma^0_n$, and $\neg Q = P \in\wt\Sigma^0_n$ then:
	\[
		a\in f^{-1}`` p \iff f(a)\in P\iff f(a)\notin Q\iff a\notin f^{-1}`` Q.
	\]
	So the result holds for $\wt\Pi^0_n$.

	Now assuming inductively that the result holds for $\wt\Pi^0_n$, let $P$ be $\wt \Sigma^0_{n+1}$.
	There is thus a $\wt\Pi^0_n$ set $Q$ such that:
	\begin{align*}
		f^{-1}``P &= \{a\mid \exists m(\langle f(a),m\rangle\in Q)\}\\
			&= \{a\mid \exists m(\langle a,m\rangle\in \bar f^{-1}``Q)\},
	\end{align*}
	Where $\bar f$ is the function given by $\bar f(\langle a, m\rangle)=\langle f(a), m\rangle$.
\end{proof}
The following two propositions are obvious relationships between these pointclasses and the usual pointclasses.

\begin{prop}
	\label{prop:relation-old-new-hierarchies}
	$\Sigma^0_n\subseteq\wt\Sigma^0_n\subseteq\mathbf\Sigma^0_n$, where the pointclasses are taken to be in the spaces of $\omega^\omega$, $[T^*]$ and $[T^*]$, respectively; the boldface Borel hierarchy on $[T^*]$ being the usual topological definition.
\end{prop}
\begin{proof}
	The first inclusion is seen by an easy induction starting with the observation that $\Sigma^0_1$ relations on $\omega^\omega$ are a union of a $\Sigma_1^{HF}$ set of basic open sets, which is thus also a $\Sigma_1(\mathcal H)$ set.
	The second inclusion follows directly from the definition.
\end{proof}

In the following proposition, $\vee$ and $\wedge$ denote the pointclasses formed by sets being the union (respectively intersection) of a set in the first class with one in the second.
\begin{prop}
	$(\Sigma^0_n \vee \wt\Sigma^0_1),(\Sigma^0_n \wedge \wt\Pi^0_1)\subseteq \wt\Sigma^0_n$ for $n>1$.
\end{prop}
\begin{proof}
	For the first we need to observe that $[T^*]$ is a metrizable space, hence $\wt\Sigma^0_1$ sets are $\wt\Sigma^0_n$. (The 
	proof works as in the classical setting.) By 
	the above, for the second part we just need to see that $\wt\Pi^0_1\subseteq\wt\Sigma^0_n$ for $n>1$ as for the usual 
	pointclasses.
\end{proof}

We note at this point that ultimately we will be interested in the analogue of $\mathcal H$ built on some sequence $\vec c\in[\aleph_\omega]^\omega$ instead of $\langle\aleph_i\mid i\in\omega\rangle$.
For concreteness we present this section in terms of the $\aleph_i$s, since this doesn't change anything.
For the rest of this section, by ``admissible'' we mean ``admissible in the language of set theory augmented with 
constants $\vec c$;'' for now they will be interpreted as the $\aleph_i$s but this need not be the case in general.
\begin{lem}
	\label{lem:can-rank-gr-trees}
	If $R$ is a generalised-recursive well-founded relation and $M$ is admissible with $\mathcal H\in M$, then 
	$\rk R\in M$.
\end{lem}
\begin{proof}
	If $R$ is $\wt\Delta^0_1$ and wellfounded, then let 
	\[
		S=\{\langle p_0,\ldots,p_k\rangle\mid\forall i<k(p_{i+1} R p_i)\}.
	\]
	For $p,q\in S$ let $p <_R q$ if either $p\supsetneq q$ or, for $i$ the least point where $p_i\neq q_i$, $p_i < q_i$.
	Then $<_R$ wellorders $S$, is generalised-recursive and $\ot <_R$ is at least $\rk R$.

	Now we need to show that the order-type of a generalised-recursive well\-order---a \emph{generalised-recursive ordinal}---is an element of $M$.
	Let $\alpha=On\cap M$, hence $\alpha>\aleph_\omega$ is an admissible ordinal and
	\[
		N=\langle L_\alpha[\langle\aleph_i\mid{i\in\omega}\rangle],\in,\langle\aleph_i\mid{i\in\omega}\rangle\rangle\rangle
	\]
	is admissible.
	Now, since $<_R$ is definable over $\mathcal H$, it is an element of 
	\[
		\langle L_{(\aleph_\omega)+1}[\langle\aleph_i\mid{i\in\omega}\rangle],\in,\langle\aleph_i\mid{i\in\omega}\rangle\rangle\rangle
	\]
	which is a subset of $N$.
	By admissibility (i.e. recursion) let $f$ be a $\Sigma_1^N$ order-preserving bijection between $<_R$ and $\ot(<_R)$.
	Hence $\ot(<_R)\in N$, and $On\cap M=On\cap N$.
\end{proof}

\begin{lem}
	\label{lem:generalised-tree-repn}
	Let $x^*\in[T^*]$ be $\wt\Sigma^1_1$. Then there is a $\wt\Pi^0_1$ tree $R\subseteq \omega\times F\times T^*$ 
	such that
	\[
		x^*(n)=a\iff \exists y\in [T^*] (\langle \langle n, a\rangle,y\rangle\in [R]).
	\]
\end{lem}
\begin{proof}
	A $\wt\Pi^0_1$ relation on $\omega^n\times F^m\times T^*$ is a tree if its $T^*$ component is closed under initial segments, that is:
	\[
		\langle i_1,\ldots i_n,a_1,\ldots,a_m,q\rangle\in R\wedge p\subseteq q \implies \langle i_1,\ldots i_n,a_1,\ldots,a_m,p\rangle\in R.
	\]
	Let $R$ be such a tree with $n=m=1$ for simplicity, then the set of branches of $R$ is:
	\[
		[R] = \{\langle n, a, x^*\rangle\in \omega\times F\times [T^*]\mid\forall m (\langle n,a,x^*\upharpoonright m \in R)\}.
	\]
	Now, being $\wt\Sigma^1_1$ means that there is a $\wt\Pi^0_1$ predicate $Y$ such that:
	\begin{align*}
		x^*(n) = a 
			&\iff \exists y\in [T^*] (Y(\langle \langle n,a\rangle,y\rangle))\\
			&\iff \exists y\in [T^*] \forall m(\langle\langle n,a\rangle,y\upharpoonright m\rangle\notin A_Y),
	\end{align*}
	where $A_Y$ is the $\Sigma_1(\mathcal H)$ set witnessing that $Y$ is $\wt\Pi^0_1$.
	So let $R$ consist of all elements $\langle\langle n,a\rangle, p\rangle$ such that $\forall m\leq|p| \langle\langle n,a\rangle,p\upharpoonright m\rangle\notin A_Y$, so that $R$ is a tree, is $\wt\Pi^0_1$ and $x^*(n) = a\iff \exists y 
	\langle\langle n,a\rangle,y\rangle\in [R]$.
\end{proof}

For $\langle n,a\rangle\in\omega\times F$ and a tree $R\subseteq \omega\times F\times T^*$, denote by $T_{\langle n,a\rangle}$ the $\langle n,a\rangle$'th part of $R$, that is, $\{p\in T^*\mid \langle\langle n,a\rangle,p\rangle\in 
R\}$.
Then the above lemma says that $x^*$ is $\wt\Sigma^1_1$ iff there is a $\wt\Pi^0_1$ tree $R$ such that $x^*(n)=a\iff R_{\langle n,a\rangle}$ is illfounded (the reverse direction being obvious).

Of course, the lemma extends easily to relations $X\subseteq F^n\times (T^*)^m\times [T^*]^l$, in which case $X$ is $\wt\Sigma^1_1$ if (and only if) there is a generalised-recursive tree $R\subseteq F^n\times F^m\times (T^*)^l \times T^*$ such that
\[
	\langle a, p, x\rangle\in X \iff \exists y\in [T^*] (\langle a, e(p), x, y\rangle\in [R])
\]
where $e$ is a fixed generalised-recursive coding of $F$ into $T^*$.
Put more succinctly, $X\subseteq[T^*]$ is $\wt\Sigma^1_1$ iff there is a generalized-recursive tree $R\subseteq T^*\times T^*$ such that:
\[
	x\in X\iff R_x \text{ is illfounded}
\]
where $R_x=\{y\in T^*\mid\langle x\upharpoonright |y|,y\rangle\in R\}$.

\begin{lem}
	If $x^*\in[T^*]$ is $\wt\Sigma^1_1$, then $x^*$ is $\Pi_1$-definable over any admissible containing $\mathcal H$ as an element.
\end{lem}
\begin{proof}
	This is essentially the Spector-Gandy theorem for elements of $[T^*]$ instead of $2^\omega$. Let $M$ be an arbitrary admissible set with $\mathcal H\in M$.

	By the above find $R\subseteq \omega\times F\times T^*$, a $\wt\Pi^0_1$ tree such that $x^*(n) = a\iff R_{\langle n,a\rangle}$ is illfounded.
	$R$ is $\Pi_1$ definable over $\mathcal H$, hence $\Delta_1$ definable over $M$, and so by $\Delta_1$ separation in $M$, $R\in M$.
	Let $\varphi(\langle n,a\rangle)$ be the $\Sigma_1$ formula:
	\[
		\exists\gamma,g(\gamma\in On\wedge g:R_{\langle n,a\rangle}\to \gamma\text{ is order-preserving}).
	\]
	Suppose $x^*(n)\neq a$. Then $R_{\langle n,a\rangle}$ is wellfounded and an element of $M$, so by admissibility there is 
	$\gamma<On\cap M$ and $g:R_{\langle n,a\rangle}\to\gamma$ witnessing the fact, hence $\varphi(\langle n,a\rangle)$ holds 
	in $M$. On the other hand if $M\vDash\varphi(\langle n,a\rangle)$ then $R_{\langle n,a\rangle}$ is really wellfounded
	and so $x^*(n)\neq a$. Hence $x^*(n) = a\iff M\vDash\neg\varphi(\langle n,a\rangle)$.
\end{proof}

\begin{cor}
	Under the same hypotheses, $x^*$ is an element of any admissible set which itself contains an admissible containing $T^*$.
\end{cor}
\begin{proof}
	Let $M\in N$ be admissible and $T^*\in M$. Then $x^*$ is definable over $M$ and $M\in N$ is transitive. Hence, since 
	the satisfaction relation is $\Delta_1^{\mathsf{KP}}$, by admissibility $(x^*)^M\in N$. Since $(x^*)^M$ is the ``true'' 
	$x^*$, we have that $x^*\in N$.
\end{proof}

\begin{rem}
	This conclusion remains true if $x^*$ is a $\wt\Sigma^1_1$ element of $2^{T^*}$, since it will be a $\Pi_1$-definable 
	class of $M$, hence an element of $N$.
\end{rem}

The following is then a weak analogue of the Kleene Basis Theorem in our setting:

\begin{thm}
	\label{thm:generalised-kbt}
	If $X^*\subseteq [T^*]$ is $\wt\Sigma^1_1$ and non-empty, then $X^*$ has an element definable over any admissible set $M$ with $T^*\in M$.
\end{thm}
\begin{proof}
	In light of the above, it will suffice to show that $X^*$ has an element which is recursive in some $\wt\Sigma^1_1$ element of $[T^*]$.
	This will then be definable over any $M$ which is admissible and contains $T^*$ as an element.

	Since $X^*$ is $\wt\Sigma^1_1$ there is a $\wt\Pi^0_1$ tree $T\subseteq T^*\times T^*$ such that $x^*\in X^*\iff T_{x^*}$ is illfounded.
	Hence any infinite branch of $T$ determines an element of $X^*$, and we need to find such a branch in $M$.
	Define $P$ to be the set of elements of $T$ which can be extended to an infinite branch:
	\[
		P=\{p=\langle (a_0,b_0),(a_1,b_1),\ldots,(a_t,b_t)\rangle\in T\mid \exists y\supseteq p\forall n(y\upharpoonright n 
		\in T)\}.
	\]
	$P$ is $\wt\Sigma^1_1$ by definition, so by the Spector-Gandy theorem for $T^*\times T^*$, we have that $P$ is 
	$\Pi_1^M$.
	
	Then the leftmost path through 
	$P$ is defined by recursion by:
	\[
		y(n) = \text{the least $a\in F\times F$ such that } (y\upharpoonright n)\concat a\in P
	\]
	which is definable from $P$ and hence over $M$, and the left part of $y$ is an element of $X^*$.
\end{proof}

\begin{rem}
	It would be natural to consider the more direct analogue of Kleene's Basis Theorem by finding a $\wt\Sigma^1_1$ basis for the $\wt\Sigma^1_1$ sets, but for our purposes, the above suffices and is more expeditious.
\end{rem}

We also note that Shoenfield Absoluteness holds for these models:

\begin{thm}
	\label{thm:generalised-shoenfield}
	If $A\subseteq\bs$ is $\Sigma^1_2(a)$, then $A$ is absolute for models $M$ as considered above, i.e. which are admissible with $T^*\in M$, if in addition $a\in M$.
\end{thm}
\begin{proof}
	Note we are now concerned with the usual $\Sigma^1_2$ sets, so the crucial things to know are that firstly $\Sigma^1_2(a)$ sets are $\omega_1$-Suslin, and secondly that the Shoenfield tree can be ranked in our models.
	(See, for instance, the final remarks in \cite{Kanamori}, 13.15.)
	Thankfully, the Shoenfield tree is easily generalised-recursive; it is defined as:
	\[
		\hat T = \{ \langle p, u\rangle\in \omega^{<\omega}\times\omega_1^{<\omega}\mid \forall i,j <|p| (s_i\supseteq s_j \wedge \langle s\upharpoonright |s_i|, s_i\rangle\in T \rightarrow u(i)<u(j) \}
	\]
	where $\langle s_i\mid i\in\omega\rangle$ is a standard recursive enumeration of elements of $\omega^{<\omega}$ and $T$ is the $\Pi^1_1(a)$ tree witnessing that $A$ is $\Sigma^1_2(a)$.
	Now this is a $\Delta_1(\{a\})$-definable element of $L_{\omega_1}[a]$, and is
	hence $\Delta_1^{\mathcal H[a]}$ where
	\[\mathcal H[a] = \langle L_{\aleph_\omega}[\langle\aleph_i\rangle,a],\in,\langle\aleph_i\rangle,a\rangle\]
	Hence $\hat T$ is also $\Delta_1^{\mathcal H[a]}$ and with minor modification, the method of Lemma \ref{lem:can-rank-gr-trees} shows that wellfoundedness of such trees is absolute for $M$.
	But this is just what is needed to prove Shoenfield's absoluteness theorem.
\end{proof}
\let\bibliography\oldbibliography
\let\section\oldsection
\let\subsection\oldsubsection

\section{Obtaining Indiscernibles}
\label{sec:Obtaining Indiscernibles}

\let\oldbibliography\bibliography
\def\bibliography#1{\relax}
\let\oldsection\section
\let\oldsubsection\subsection
\let\section\subsection
\let\subsection\subsubsection

\section{Introduction}
We aim to define a notion of Prikry forcing which, when executed over a model of \nkp yields a generic extension which is also a model of \nkp, and which satisfies a form of \emph{generating indiscernibility}.
This is all a straightforward checking that the forcing can be defined over the model and preserves \nkp. First we define the indiscernibility principle we are interested in:

\begin{dfn}
	\label{dfn:generating-indiscernibles}
	A closed-unbounded class of ordinals $C$ is a class of \emph{$\Sigma_n$ generating indiscernibles} for the theory $T$ if, for any $\vec c,\vec d\in[C]^\omega$, letting $\mathcal A_T[\vec c]$ be the least transitive model of $T$ containing $\vec c$ as an element, we have $\mathcal A_T[\vec c]\equiv_{\Sigma_n}\mathcal A_T[\vec d]$.
\end{dfn}

First of all we seek to prove:

\begin{thm}
	\label{thm:nkp-mouse-implies-gamma-n}
	If there exists a non-trivial mouse which is a model of \nkp, then there exists a class of $\Sigma_n$ generating indiscernibles for \nkp.
\end{thm}

Fix a mouse
\[
	M=\langle J^E_\theta, E, F\rangle\vDash\text{``\nkp + $F$ is a normal measure on $\kappa$''}.
\]
Let us fix some terminology for the remainder of the section:

\begin{dfn}
	\label{def:M-lambda}
	Let $C=\{\kappa_0,\kappa_1,\ldots\}$ be the class of iteration points of $M$ by $F$, and let $\vec c=\{c_0,c_1,\ldots\}\in[C]^\omega$ be any fixed $\omega$-sequence of them indexed in increasing order.
	Then let $\lambda$ be such that the measurable cardinal of $M_\lambda$, the $\lambda$th iterate, is $\sup\vec c$, $\pi:M\to M_\lambda$ the iteration map.
	With $E_\lambda, F_\lambda$ the filter sequence and top measure of $M_\lambda$, respectively, let $\widetilde\theta$ be such that $M_\lambda=\langle J^{E_\lambda}_{\widetilde\theta},E_\lambda,F_\lambda\rangle$.

\end{dfn}

Now note that by $\Sigma_n$ separation we may apply \L os' theorem to $\Sigma_n$ formul\ae, so these ultrapowers preserve $\Sigma_n$ sentences, and hence $C$ is a set of $\Sigma_n$ indiscernibles for $M_\lambda$.

We use the notation $X\smallsetminus p$, for sets of ordinals $X,p$ to mean $X\setminus(\sup p + 1)$.
\begin{lem}
	\label{lem:contains-all-indiscernibles}
	Let $p=\{ c_0,\ldots,c_k\}$ and $e\in[C]^{<\omega}$ be a finite set of iteration points with $\max e < c_k$.
	Let $X\in F_\lambda$ be $\Sigma_n$-definable in $M_\lambda$ from $\pi(f)$ (for $f\in M^\kappa$), $e, p$.
	Then $X\supseteq C\smallsetminus p$.
\end{lem}
\begin{proof}
	By the properties of iterated ultrapowers, the measure 1 set $X$ must include an end-segment from $C$, which, as remarked above, is a set of $\Sigma_n$-indiscernibles for $M_\lambda$.
	Thus $X$ contains at least one indiscernible greater than all the indiscernible parameters in its definition, and so by indiscernibility, it contains all indiscernible greater than $\max p$.
\end{proof}

We need to know more, namely that $\nkp$ is preserved in the ultrapower.

\begin{thm}
	\label{thm:iterating-preserves-separation}
	$M_\lambda\vDash\nkp$.
\end{thm}
\begin{proof}
	First suppose that we know that the iterate $M_\alpha\vDash\nkp$.
	We use the standard fact that the theory of \nkp is equivalent to there being no $\Sigma_n$-definable partial function, whose image of a set in the model is unbounded in the ordinals of the model.

	Thus let $\varphi(\xi,\gamma,p)$ define a $\Sigma_n$-partial function $f$; $f(\xi)=\gamma\leftrightarrow\varphi(\xi,\gamma,p)$, with $f$ and $A\in M_{\alpha+1}$ witnessing the failure of \nkp, i.e. $f``A$ is unbounded in $On\cap M_{\alpha+1}$.

	Expanding $f``A$ being unbounded, we get the $\Pi_{n+1}$ formula:
	\[
		M_{\alpha+1}\vDash\forall\tau \exists\gamma>\tau \exists\xi\in A (\varphi(\xi,\gamma,p)).
	\]
	Fix $p=\pi_{\alpha\alpha+1}(g)(\kappa_\alpha)$ for some $g\in M_\alpha$.
	By cofinality of the ultrapower embedding $\pi_{\alpha\alpha+1}$, let $\bar A$ be such that $A\subseteq\pi_{\alpha\alpha+1}(\bar A)$.
	Then fix an arbitrary $\bar\tau\in M_\alpha$, and by applying \L o\'s' theorem to the above for $\tau=\pi_{\alpha\alpha+1}(\bar\tau)$:
	\[
		M_\alpha\vDash \{\beta<\kappa_\alpha\mid \exists\gamma>\bar\tau\exists\xi\in\bar A(\varphi(\xi,\gamma,g(\beta)))\}\in F^\alpha.
	\]
	Hence:
	\[
		M_\alpha\vDash\forall\bar\tau[\exists q\exists\xi\in\bar A\exists\gamma>\bar\tau(\varphi(\xi,\gamma,q))]
	\]
	and hence the formula $\psi(\xi,\gamma) \equiv \gamma=\sup \{\gamma' \mid \exists q\varphi(\xi,\gamma',q)\}$ defines a $\Sigma_n$ partial function $\bar f$, since the $\gamma'$s are bounded by \nkp.
	But then:
	\[
		\bar f``\bar A = \{\gamma\mid\exists\xi\in\bar A(\psi(\xi,\gamma))\}
	\]
	is unbounded, thus contradicting \nkp in $M_\alpha$.

	Thus we have shown that if $M_\alpha\vDash\nkp$, so too does $M_{\alpha+1}$, and we just have to show that the limit models behave similarly.
	But if $\nu$ is limit and $p,X\in M_\nu$ then take $\theta$ such that $p=\pi_{\theta,\nu}(\bar p)$ and $X=\pi_{\theta,\nu}(\bar X)$.
	By induction we may assume that if $\varphi$ is $\Sigma_n$, $\bar B=\{\alpha\in\bar X\mid\varphi(\alpha,\bar p)\}$ is an element of $M_\theta$.
	Hence
	\[
		\{\alpha\in X\mid\varphi(\alpha,p)\}
		= \{\alpha\in \pi_{\theta,\nu}(\bar X)\mid\varphi(\alpha,\pi_{\theta,\nu}(\bar p))\}
		= \pi_{\theta,\nu}(\bar B)\in M_\nu
	\]
	by elementarity, and we are done.
\end{proof}

\section{The Forcing}

$M_\lambda$ will be the ground model for Prikry forcing, which will add the set of indiscernibles $\vec c$ to the generic extension, whilst preserving \nkp.

Rowbottom's theorem will be needed for several of the proofs in this section:

\begin{lem}
	$\mathsf{KP}$ proves that if $\kappa$ is a measurable cardinal with normal measure $F$, and $f$ is a partition of 
	$[\kappa]^{<\omega}$ into less than $\kappa$ pieces, then there is a $\Delta_1(f)$ set $H\in F$ homogeneous for $f$, 
	i.e. such that for each $n$, $f$ is constant on $[H]^{n}$.
\end{lem}
\begin{proof}
	This is essentially \cite{Jech}, Theorem 10.22. Let $F$ be a normal measure on $\kappa$, and let $f$ partition 
	$[\kappa]^{<\omega}$ into less than $\kappa$ pieces.  If we can find measure one sets $H_n$ such that $f$ is 
	homogeneous on $[H_n]^n$, then $f$ is also homogeneous for $H=\bigcap H_n$.

	For $n=1$ consider the intersection $\bigcap\{\kappa\setminus f^{-1}``\{\alpha\}\mid \alpha\in\ran f\}$.
	This set is empty, so by $\kappa$-completeness one of the terms must have been measure zero, i.e. some $\alpha$ must have pre-image of measure one, which we take to be $H_1$.

	Now proceed by induction: let $f:[\kappa]^{n+1}\to I$ for some $|I|<\kappa$. Now for each 
	$\alpha<\kappa$ we define $f_\alpha$ with domain $[\kappa\setminus\{\alpha\}]^n$ by $f_\alpha(x)=f(\{\alpha\}\cup 
	x)$. By hypothesis there exist $\Delta_1$ sets $X_\alpha\in F$ such that $f_\alpha$ is constant on $[X_\alpha]^n$. We 
	fix this constant value to be $i_\alpha$.
	By \KP the sequence $\{X_\alpha\mid\alpha<\kappa\}$ exists.

	Let $X$ be the diagonal intersection of the $X_\alpha$s, 
	$\{\alpha<\kappa\mid\alpha\in\bigcap_{\gamma<\alpha}X_\gamma\}$, which is in $F$ by normality, and is a $\Delta_1$ 
	subset of $\kappa$. Now if $\gamma<\alpha_1<\ldots<\alpha_n$ are in $X$, then 
	$\{\alpha_1,\ldots,\alpha_n\}\in[X_\gamma]^n$, so that 
	$f(\{\gamma,\alpha_1,\ldots,\alpha_n\})=f_\gamma(\{\alpha_1,\ldots,\alpha_n\})=i_\gamma$. Now, $\gamma\mapsto 
	i_\gamma$ constitutes a partition of $[X]^n$ into $\gamma<\kappa$ pieces, so again by hypothesis we know there is
	$i\in I$ and a $\Delta_1$ set $H\subseteq X$ in $F$ such that $i_\gamma=i$ for all $\gamma\in H$, hence $f(x)=i$ for 
	all $x\in[H]^{n+1}$.
\end{proof}

The way we will use Rowbottom's theorem is, if we have a set $A\subseteq [\kappa]^{<\omega}$, we view the characteristic 
function of $A$ as a partition of $[\kappa]^{<\omega}$ into 2. There is then a $\Delta_1(A)$ set $Z\in F$ such that either 
$[Z]^{<\omega}\subseteq A$ or $[Z]^{<\omega}\cap A=\varnothing$. 

We now set up the basic forcing definitions in a way suitable for this purpose.
This will require a return to ramified forcing, as originally conceived by Cohen (see \cite{Cohen}) and used in \cite{Welch96}.
Since the forcing we use will be a class forcing from the perspective of the model, our setting is more general than that used by Cohen in \cite{Cohen}, and the use of theories stronger than \KP means we extend Welch's work, too.
In addition, accounts since Cohen's of ramified forcing have mostly been sketched, without full definitions or proofs, and the notation of Cohen's account is now outdated.
For these two reasons, we set out the full definitions and proofs here.
We will need to check that the usual properties of forcing hold in spite of our weak setting and the fact that we are using a class forcing.

\begin{dfn}
	Let $\langle\P,\leq\rangle$ be Prikry forcing defined over $M_\lambda$, that is:
	\begin{align*}
		\mathbb{P} :=\; & \{\langle p, X\rangle\mid p\in[\kappa]^{<\omega}, X\in F_\lambda\cap M_\lambda\}\\
		\langle p, X\rangle \leq \langle q, Y\rangle \:\leftrightarrow\; &
		q \text{ is an initial segment of } p \wedge X\cup(p\setminus q)\subseteq Y.
	\end{align*}
\end{dfn}
This class is $\Delta_1$-definable over $M_\lambda$, but since $F_\lambda$ is not a set in the model, it is not a set forcing and so most of the work will be towards proving that \nkp is preserved.

\begin{dfn}
	We define a ranked forcing language, $\LF$.
	First of all, we define constants for the language, which are representations of class terms and are intended to name sets in the generic extension.

	$\mathcal C_0$ consists of $\{\csym\}\cup\kappa_\lambda$, where $\csym$ is a constant symbol.

	If $\mathcal C_\alpha$ is defined then $\mathcal C_{\alpha+1}$ consists of the class terms $\{x^\alpha\mid\varphi(x^\alpha)\}$ where $\varphi$ is a formula built up from: symbols $\in, =, \csym$; any element of $\mathcal C_\beta$ for $\beta<\alpha$; logical connectives and where any quantified variable is of the form $\exists x^\beta$ for $\beta\leq\alpha$.

	Take unions at limit stages, and let $\mathcal C=\bigcup_{\alpha<\wt\theta} C_\alpha$. (Recall that $\wt\theta$ is the height of $M_\lambda$.)

	The \emph{rank} of one of these constants is the ordinal $\alpha$ such that it is an element of $\mathcal C_\alpha$.

	$\LF$ then consists of all formul\ae{} built from the symbols: $\in, =$; unranked variables $x, y, z,\ldots$; ranked variables $x^\alpha, y^\alpha, z^\alpha$ for $\alpha<\widetilde\theta$; connectives; quantifiers; and elements of $\mathcal C_\alpha$ for $\alpha<\widetilde\theta$.

	If $\varphi\in\LF$ then we say it is \emph{ranked} if every variable in it is ranked, in which case its rank is the maximum of the ranks of quantified variables in $\varphi$ and of the ranks of any constant terms occurring in $\varphi$.
\end{dfn}

We now define the weak forcing relation $\mathbf p\Vdash^*\varphi$ for $\mathbf p=\langle p, X\rangle\in\P$ and $\varphi$ a sentence in $\LF$:

\begin{enumerate}
	\item $\mathbf p\Vdash^* x\in y$ iff $x\in \mathcal C_\alpha, y\in \mathcal C_\beta$ and:
		\begin{enumerate}
			\item $\alpha = \beta = 0$ and either $x\in y\in\kappa$ or $x\in p\wedge y=\csym$; or
			\item $\alpha < \beta$, $y=\{z^\gamma\mid\varphi(z^\gamma)\}$ and $\mathbf p\Vdash^*\varphi(x)$; or else
			\item $\alpha\geq\beta$ and $\exists z\in \mathcal C_\gamma$ for some $\gamma$, either $\beta>\gamma$ or $\beta=\gamma=0$ and
				\[ \mathbf p\Vdash^*z=x\wedge z\in y \]
		\end{enumerate}
	\item $\mathbf p\Vdash^* x=y$ iff $\mathbf p\Vdash^*\forall z^\alpha(z\in x\leftrightarrow z\in y)$ for $\alpha$ the maximum of the ranks of $x$ and $y$.
	\item $\mathbf p\Vdash^* \varphi\wedge\psi$ iff $\mathbf p\Vdash^*\varphi$ and $\mathbf p\Vdash^*\psi$.
	\item $\mathbf p\Vdash^* \neg\varphi$ iff $\forall\mathbf q\in\P(\mathbf q\leq \mathbf p \implies \mathbf q\nVdash^*\varphi)$.
	\item $\mathbf p\Vdash^* \exists x^\alpha(\varphi(x^\alpha))$ iff there is some $t\in \mathcal C_\alpha$ such that $\mathbf p\Vdash^*\varphi(t)$.
	\item $\mathbf p\Vdash^* \exists x(\varphi(x))$ iff there is some $t\in \bigcup_\alpha \mathcal C_\alpha$ such that $\mathbf p\Vdash^*\varphi(t)$.
\end{enumerate}

We note that another notion of rank may be defined for formul\ae{} of the forcing language that are ranked as defined above, so that whenever the definition of $\mathbf p\Vdash^*\varphi$ refers to a formula $\psi$ of the forcing language, this new rank strictly decreases.
This makes the above recursive definition legitimate, and allows arguments about the forcing relation ``by induction on $\varphi$.''

\begin{dfn}
	A filter $G\subset \P$ is \emph{$M_\lambda$-generic} for \P if it meets all $\Sigma_n^{M_\lambda}$-definable subsets of $\P$ \emph{and} if, for every $\Sigma_n$ sentence $\varphi$ of the forcing language, there is a $\mathbf p\in G$ such that $\mathbf p\Vdash^*\varphi\vee\mathbf p\Vdash^*\neg\varphi$.
\end{dfn}
\begin{rem}
	Note that the latter requirement also implies that such a generic decides every $\Pi_n$ sentence.

	Usually these two forms of genericity are equivalent, but here we will have to prove them both.
\end{rem}

If $G$ is $M_\lambda$-generic for \P then let $\vec c=\bigcup\{p\in[\kappa_\lambda]^{<\omega}\mid\exists X\in F_\lambda(\langle p,X\rangle\in G)\}$.
If we define an appropriate $G$, this will be the sequence $\vec c$ of iteration points we picked above.
The generic extension is defined as $M_\lambda[G]=\langle J_{\widetilde\theta}^{\vec c}, \vec c\rangle$, and if there is only one $G$ under discussion we write $M_\lambda[\vec c]$.

This forcing relation obeys the usual properties:
\begin{lem}
	\leavevmode
	\begin{enumerate}
		\item For no $\mathbf p,\varphi$ does $\mathbf p\Vdash^*\varphi\wedge\neg\varphi$;
		\item If $\mathbf p\Vdash^*\varphi$ and $\mathbf q\leq\mathbf p$ then $\mathbf q\Vdash^*\varphi$;
		\item $\mathbf p\Vdash^*\varphi\iff\forall\mathbf q\leq \mathbf p\exists\mathbf r\leq\mathbf q(\mathbf r\Vdash^*\varphi)$;
	\end{enumerate}
\end{lem}
\begin{proof}
	\leavevmode
	\begin{enumerate}
		\item[1--2] are standard and can be found in \cite{Cohen} Section IV.4.
		\item[3.] The forward implication is obvious by 2 and the backwards implication is proved by induction on $\varphi$:
			Suppose first that $\varphi$ is $x\in y$.
			If $x,y\in \mathcal C_0$ and $y\in\kappa$, then, since forcing this doesn't depend on $\mathbf p$, there's nothing to do.
			If $\varphi$ is $x\in\csym$ then we have to show that $x\in p$ where $\mathbf p=\langle p, X\rangle$.
			But if not, then we can find $q\in X\setminus\{x\}$ and then $\langle p\cup\{q\},X\rangle\Vdash^*\neg\varphi$ for a contradiction.

			For every other case we use the induction hypothesis in the obvious way.
	\end{enumerate}
\end{proof}

The reason for using the ranked forcing language is that it allows the forcing relation to be simply definable, as we prove in the following lemma:

\begin{lem}
	\label{lem:prikry}
	If $\mathbf p\in\P$ and $\varphi\in\mathcal L_{F_\lambda}$ is a ranked sentence, then:
	\begin{enumerate}
		\item If $\mathbf p=\langle p,X\rangle$ there is a $\Delta_1^{M_\lambda}(\mathbf p,\varphi)$ set $Y\in F_\lambda$ such that $\langle p, Y\rangle\|\varphi$, that is either $\langle p,Y\rangle\Vdash^*\varphi$ or $\langle p, Y\rangle\Vdash^*\neg\varphi$; and
		\item $\langle p,Y\rangle\Vdash^*\varphi$ is $\Delta_1^{M_\lambda}$.
	\end{enumerate}
\end{lem}
\begin{proof}
	The first assertion is a re-statement of the Prikry lemma, but we will prove both statements in a simultaneous induction on the complexity of $\varphi$.

	First suppose $\varphi$ is $x\in y$ for $x,y\in \mathcal C_0$.
	Then certainly $\mathbf p\Vdash^*\varphi$ is $\Delta_1$-definable.
	If $\mathbf p\Vdash^*\varphi$, then set $Y=X$ and we are also done with part 2.
	If not, then:
	\[
		(x\notin y\vee y\notin\kappa)\wedge(x\notin p\vee y\neq\csym).
	\]
	This is preserved when strengthening $\mathbf p$, so in fact $\mathbf p\Vdash^*\neg\varphi$, and again we can take $Y=X$ for part 2.

	Now suppose we have proved the statement of the lemma for all formul\ae{} occurring in the definition of $\mathbf p\Vdash^*\varphi$.
	We shall not carry out the proof in every case, but the following illustrate the most important and involved parts:
	\begin{enumerate}
		\item If $\varphi$ is $x\in y$ with $x,y$ having rank $\alpha<\beta$ respectively, then let $y=\{z^\gamma\mid\psi(z^\gamma)\}$.
			Then the induction hypothesis says that for each $d\in \mathcal C_\gamma$ there is an extension of $\mathbf p$ to $\langle p, Y_d\rangle$ which decides $\psi(d)$.
			The condition $\mathbf p_x=\langle p, Y_x\rangle$ therefore decides $\psi(x)$.
			By definition, $\mathbf p_x\Vdash^*x\in y\iff \mathbf p_x\Vdash^*\psi(x)$, so we are done.
		\item If $\varphi$ is $\neg\psi$, then by hypothesis extend $\mathbf p$ to $\langle p, Y\rangle$ in a $\Delta_1$ way such that $\langle p, Y\rangle\|\psi$.
			But then $\langle p, Y\rangle$ also decides $\varphi$.

			Likewise if we know $\langle p,Y\rangle\Vdash^*\psi$ then $\langle p,Y\rangle\nVdash^*\varphi$ and vice-versa, and if the former is $\Delta_1$ definable over $M_\lambda$ then so is the latter.
		\item If $\varphi$ is $\exists x^\alpha(\psi(x^\alpha))$ then define:
			\[
				S^+ = \{q\in [X]^{<\omega}\mid\exists d,\exists X^d\subseteq X(\langle p\cup q, X^d\rangle\Vdash\psi(d))\}.
			\]
			By hypothesis, for each $q,d$ there is a $\Delta_1^{M_\lambda}$-definable set $Y$ such that $\langle p\cup q,Y\rangle\|\psi(d)$, and such that $\langle p\cup q,Y\rangle\Vdash^*\psi(d)$ is $\Delta_1$.
			This means that the existence of $X^d$ in the definition of $S^+$ is equivalent to $\langle p\cup q,Y\rangle\Vdash^*\psi(d)$ since, if $\langle p\cup q,Y\rangle\Vdash^*\neg\psi(d)$ there can be no such $X^d$ as any $\langle p\cup q,X^d\rangle$ is compatible with $\langle p\cup q, Y\rangle$.

			Thus $S^+$ is a $\Sigma_1^{M_\lambda}$ subset of $[\kappa_\lambda]^{<\omega}$ and by Rowbottom's theorem there is a set $A\in F_\lambda\cap M_\lambda$ such that either $[A]^{<\omega}\subseteq S^+$ or $[A]^{<\omega}\cap S^+=\varnothing$.

			Suppose $\langle p, A\rangle$ does not decide $\varphi$.
			Hence there are conditions $\langle r, Y\rangle, \langle s, Z\rangle$, each stronger than $\langle p, A\rangle$ such that one forces $\varphi$ and the other forces $\neg\varphi$.
			Extending the shorter sequence if necessary, assume $\ell h(r) = \ell h(s) = k$.
			Then $\langle r, Y\rangle\leq\langle p, A\rangle$ so $r\in[A]^k$, but similarly $s\in [A]^k$.
			Hence $[A]^k$ contains elements in $S^+$ and out of it.
			This is a contradiction and so $\langle p, A\rangle$ decides $\varphi$.
	\end{enumerate}
\end{proof}

\begin{cor}
	The relation $\mathbf p\Vdash^*\varphi$ is $\Delta_1^{M_\lambda}$ definable for ranked sentences $\varphi$, $\Sigma_n^{M_\lambda}$ for $\Sigma_n$ sentences and $\Pi_n^{M_\lambda}$ for $\Pi_n$ sentences of the forcing language.
\end{cor}
\begin{proof}
	Let $\mathbf p=\langle p,X\rangle$.
	If $\varphi$ is ranked then for each $q\in[X]^{<\omega}$ let $Y^q$ be the set produced by the above lemma given $\langle p\cup q,X\rangle, \varphi$.
	Then:
	\begin{align*}
		\mathbf p\Vdash\varphi
			&\iff \forall q\in[X]^{<\omega} (\langle p\cup q, Y^q\rangle\Vdash\varphi).
	\end{align*}
	The forward direction is obvious since $\langle p\cup q, Y^q\rangle\leq\mathbf p$, and the backward direction holds because then, taking any $X'\subseteq X$ we have that $\langle p\cup q,X'\cap Y^q\rangle\Vdash\varphi$, i.e. the set of extensions of $\mathbf p$ which force $\varphi$ is dense.

	This proves the first part, since the given formula is $\Delta_1$, and for unranked formul\ae{} the result is a simple induction on L\'evy rank.
\end{proof}

When interpreting a formula of \LF in $M_\lambda[\vec c]$, we interpret constant symbols from $\mathcal C_\alpha$ as the corresponding class term, $\csym$ as the predicate $\vec c$ and quantification over ranked variables $\exists x^\alpha$ as bounded quantification $\exists x\in J^{\vec c}_\alpha$.

\begin{lem}[Truth Lemma]
	If $G$ is $M_\lambda$ generic for $\P$ and $\varphi$ is a $\Sigma_n$ or a $\Pi_n$ sentence of \LF, then $M_\lambda[G]\vDash\varphi\iff\exists\mathbf p\in G(\mathbf p\Vdash^*\varphi)$.
\end{lem}
\begin{proof}
	Again this is proved by induction on $\varphi$, first for ranked formul\ae.
	\begin{enumerate}
		\item
			If $\varphi$ is $x\in y$, we must check several cases, firstly assuming that $\mathbf p\in G,\mathbf p\Vdash^*\varphi$.
			If $x,y$ in $\mathcal C_\alpha,\mathcal C_\beta$, respectively, then firstly consider the case when $\alpha=\beta=0$.
			Here we only need to check that $\mathbf p\Vdash^*x\in\csym\implies M_\lambda[\vec c]\vDash x\in\vec c$, which is true since by definition $x\in p$, and $\mathbf p\in G$ so $p\subseteq\vec c$.

			If $\alpha<\beta$ then $\mathbf p\Vdash^*\psi(x)$ for $\psi$ the defining formula of $y$.
			By induction, $M_\lambda[\vec c]\vDash\psi(x)$, so $M_\lambda[\vec c]\vDash x\in y$.

			If $\alpha\geq\beta$ then $\mathbf p\Vdash^*z=x\wedge z\in y$ for some $z\in \mathcal C_\gamma$, so
			inductively, $M_\lambda[\vec c]\vDash z=x\wedge z\in y$ which completes the forward direction.

			If $M_\lambda[\vec c]\vDash x\in y$ then first suppose $y=\vec c$.
			Then $\exists\langle p,X\rangle\in G(x\in p)$, i.e. $\langle p, X\rangle\Vdash^*x\in y$.
			If $y$ is an ordinal then every condition will force $x\in y$.
			Otherwise, find $\alpha,\beta$ and $\bar x\in \mathcal C_\alpha, \bar y\in \mathcal C_\beta$ such that $\bar x,\bar y$ evaluate to $x, y$, respectively, in $M_\lambda[\vec c]$.
			When $\alpha<\beta$, the proof is straightforward by induction.
			If $\alpha\geq\beta$, we need to find $\gamma<\beta$, $\bar z\in \mathcal C_\gamma$ such that $\mathbf p\Vdash^*\bar z=\bar x\wedge\bar z\in\bar y$.
			But if $x\in y$ then $x$ occurs in the $J^{\vec c}$ hierarchy strictly before $y$, and so is realised as a class term in $\mathcal C_\gamma$ for $\gamma<\beta$.
			From here induction gives us what we want.

		\item If $\varphi$ is $x=y$ then
			\begin{align*}
				M_\lambda[\vec c]\vDash x=y
				&\iff\exists\alpha<\widetilde\theta(\forall z\in J_\alpha^{\vec c}(z\in x\leftrightarrow z\in y))\\
				&\iff\exists\mathbf p\in G(\mathbf p\Vdash^*(\forall z^\alpha(z\in x\leftrightarrow z\in y)))\\
				&\iff\exists\mathbf p\in G(\mathbf p\Vdash^*(x=y)),
			\end{align*}
			where we use the induction hypothesis in the second equivalence.

		\item If $\varphi$ is $\psi_0\wedge\psi_1$ then the argument is standard.

		\item
			If $\varphi$ is $\neg\psi$ and $\mathbf p\in G$, $\mathbf p\Vdash^*\neg\psi$, then suppose $\psi$ held in $M_\lambda[\vec c]$.
			Then, by the induction hypothesis there is $\mathbf q\in G(\mathbf q\Vdash^*\psi)$.
			But there would then be a $\mathbf r\leq\mathbf p,\mathbf q$ with $\mathbf r\Vdash^*\psi\wedge\neg\psi$, which is impossible, completing the proof of the right-to-left direction.

			On the other hand if $M_\lambda[\vec c]\vDash\neg\psi$ then there is by genericity at least a condition $\mathbf p\in G$ such that $\mathbf p\|\psi$.
			If $\mathbf p\Vdash^*\neg\psi$ then we are done, and otherwise the inductive hypothesis for $\psi$ would imply that $M_\lambda[\vec c]\vDash\psi$, which is impossible.

		\item If $\varphi$ is $\exists x^\alpha\psi(x^\alpha)$ and $\mathbf p\in G$, $\mathbf p\Vdash^*\varphi$ then there is some $x\in \mathcal C_\alpha$ such that $\mathbf p\Vdash^*\psi(x)$, and hence $M_\lambda[\vec c]\vDash\psi(x)$, with $x$ interpreted appropriately, hence $M_\lambda[\vec c]\vDash\exists x\in J^{\vec c}_\alpha(\psi(x))$.
			The opposite direction is identical.
		\item If $\varphi$ is $\exists x\psi(x)$ and $\mathbf p\in G$, $\mathbf p\Vdash^*\varphi$ then there is some $\alpha$ such that $\mathbf p\Vdash^*\exists x^\alpha\psi(x^\alpha)$, and we can use the previous case.
			If $M_\lambda[\vec c]\vDash\exists x\psi(x)$ then, by the definition of the generic extension, $M_\lambda[\vec c]\vDash x\in J_\alpha^{\vec c}(\psi(x))$, and again we proceed as above.

	\end{enumerate}
\end{proof}

We adopt the usual generalisation of diagonal intersection:
\[
	\Delta_p A_p = \Delta_\alpha\left(\bigcap\{A_p\mid\max p<\alpha\}\right).
\]

\begin{thm}
	The filter
	\[
		G_{\vec c} = \{\langle p, X\rangle\in\mathbb P\mid p\text{ is an initial segment of $\vec c$ and } \vec c \setminus 
			p\subseteq X\}
	\]
	is $M_\lambda$-generic for $(\P)^{M_\lambda}$.
\end{thm}
\begin{proof}
	We first prove that $G_{\vec c}$ intersects every $\Sigma_n$ dense class of $M_\lambda$.
	Let $D$ be a $\Sigma_n^{M_\lambda}$ subset of \P.
	Let $S_p=\{q\in[\kappa_\lambda]^{<\omega}\mid\exists X(\langle p\cup q, X\rangle\in D)\}$.
	This is $\Sigma_n$, so by Rowbottom's theorem there is a measure 1 set $A_p\in F_\lambda\cap M_\lambda$ such that either $[A_p]^{<\omega}\subseteq S_p$ or $[A_p]^k\cap S_p=\varnothing$ for some $k$.
	If there is some $X\in F_\lambda$ such that $\langle p,X\rangle\in D$ then let $X_p$ be the $M_\lambda$-least and set $B_p=A_p\cap X_p$, otherwise let $B_p=A_p$.
	Then let $B$ be the diagonal intersection of the $B_p$'s.
	Being measure 1, $B$ includes a final segment of $\vec c$, so let $q$ be such that $\vec c\setminus q\subseteq B$.
	By density there must be an extension of $\langle q,B\rangle$, say $\langle q\cup t,X\rangle$, lying in $D$.

	If $r\subseteq\vec c$ extends $q$ then $r\setminus q\subseteq B$ by the way we picked $q$.
	By the definition of $B$, $B\smallsetminus q\subseteq A_q$ and so $B\smallsetminus q$ is homogeneous for $S_q$.
	Thus since $t\in S_q\cap B\smallsetminus q$ by definition, $[B\smallsetminus q]^{<\omega}\subseteq S_q$.
	Hence $r\in S_q$, and so for some $Y$ the condition $\langle q\cup r,Y\rangle\in D$.
	Since $q\cup r\subseteq\vec c$, $G\cap D\neq\varnothing$, completing the proof of the first requirement for genericity.

	Now we prove that $G_{\vec c}$ decides every $\Sigma_n$ sentence of $\LF$.
If $\varphi$ is ranked then by Lemma \ref{lem:prikry} for any $p\in[\kappa_\lambda]^{<\omega}$ there is a $\Delta_1^{M_\lambda}(p,\varphi)$-definable set $X$ such that $\langle p,X\rangle\|\varphi$.
	Since $X$ is measure 1 it includes an end-segment of indiscernibles, and since it is definable from $p$ and $\varphi$, it contains every indiscernible beyond $p$.
	Thus if $p\subseteq\vec c$ then $\langle p, X\rangle\in G_{\vec c}$, completing the proof in this case.

	Suppose $\varphi$ is $\exists x\psi(x)$, with $\psi$ a $\Pi_{k-1}$ formula of $\LF$, $k\leq n$, and fix $p\in[\kappa_\lambda]^{<\omega}$.
	Suppose inductively that, for each $t\in \mathcal C$, there is an $X\in F_\lambda$ such that $\langle p, X\rangle\in G_{\vec c}$ and $\langle p, X\rangle\|\psi(t)$.
	Let:
	\begin{align*}
		\theta(X) &\equiv \langle p, X\rangle\Vdash^*\psi(t) \vee \langle p, X\rangle\Vdash^*\neg\psi(t).
	\end{align*}
	This is $\Delta_k$, so for each $t$, let $X_t$ be the $M_\lambda$-least set satisfying $\theta$, hence $\Delta_k^{M_\lambda}$:
	\begin{align*}
		X=X_t &\iff \theta(X) \wedge \forall Y(Y\not<_{M_\lambda} X \vee \neg\theta(Y)) \\
			&\iff \theta(X) \wedge \exists u(u=\operatorname{pr}(X)\wedge\forall Y\in u(\neg\theta(Y))).
	\end{align*}
	Here $\operatorname{pr}$ is the $\Delta_1$ function returning the set of all $<_{M_\lambda}$-predecessors of a set, following \cite{Devlin} where this is defined for $L$ in II.3.5.
	Thus $t\mapsto X_t$ is $\Delta_k^{M_\lambda}$ and the following set is $\Pi_k^{M_\lambda}$:
	\[
		Z = \{\alpha\in\kappa_\lambda\mid\forall t\in \mathcal C (\alpha\in X_t)\}.
	\]
	$Z$ is, by $\Sigma_k$ separation, an element of $M_\lambda$ and $Z\supseteq(\vec c\smallsetminus p)$, so $\mathbf p=\langle p,Z\rangle\in G_{\vec c}$.
	$\mathbf p\|\psi(t)$ for each $t$, so $\mathbf p\|\varphi$, completing the induction and the proof.\end{proof}
\begin{rem}
	Note that we may add finitely many elements of $\kappa_\lambda$ to $\vec c$, and the above proof still works.
	This means that, if $\mathbf p=\langle p, X\rangle\in\P$ then $G_{p\cup\vec c}$ is $M_\lambda$-generic and contains $\mathbf p$.
\end{rem}

\begin{thm}
	$\mathbf p\Vdash\varphi\iff\mathbf p\Vdash^*\varphi$, for $\Sigma_n$ formul\ae{} $\varphi$.
\end{thm}
\begin{proof}
	By $\mathbf p\Vdash\varphi$ we mean the usual (semantic) forcing relation:
	\[
		\forall G(G\text{ is $M_\lambda$-generic for $\P$} \wedge \mathbf p\in G \rightarrow M_\lambda[G]\vDash\varphi).
	\]
	The right-to-left implication of the theorem is provided by the previous lemma.

	For the forward direction suppose $\mathbf p\Vdash\varphi$.
	Suppose for a contradiction that the $\Sigma_n$ class $D=\{\mathbf q\mid \mathbf q\Vdash^* \varphi\}$ is not dense below $\mathbf p$.
	In other words, suppose $\mathbf q\leq\mathbf p$ is such that $\forall\mathbf r\leq\mathbf q(\mathbf r\nVdash^*\varphi)$, i.e. $\mathbf q\Vdash^*\neg\varphi$.
	But then by the right-to-left implication, $\mathbf q\Vdash\neg\varphi$.
	Let $G$ be $M_\lambda$-generic for \P with $\mathbf q\in G$ so $M_\lambda[G]\vDash\neg\varphi$.
	But $\mathbf p\in G$, too, so $M_\lambda[G]\vDash\varphi$, which is a contradiction.
\end{proof}

We have now checked that the usual properties of forcing hold, and can use the $\Vdash$ relation as we normally would, safe in the knowledge that $\mathbf p\Vdash\varphi$ is $\Sigma_n$ as long as $\varphi$ is.

\subsection{The Generic Extension}
\label{sub:The Generic Extension}

We now want to prove that, in the $\Sigma_n$ case of the Truth Lemma, it doesn't matter which initial segment of $\vec c$ we pick; we may always find a suitable measure-1 set $Y$ to force the statement.

\begin{lem}
	\label{lem:forcing-thm}
	Let $p=\{ c_0,\ldots,c_l\}$ and $y$ an arbitrary constant $\Sigma_n$-definable from $\ran\pi$ and finitely many elements of $C$, with the maximum such element $c_j<c_l$.
	Suppose $\psi$ is $\Pi_{n-1}$. 
	Then:
	\[
		M_\lambda[\vec c]\vDash\exists z\psi(z, y) \Leftrightarrow M_\lambda\vDash\exists Y \langle p, 
		Y\rangle\Vdash\exists z\psi(z,y).
	\]
\end{lem}
\begin{proof}
	Fixing $p$, suppose the left hand side holds, and work in $M_\lambda$.
	Then define:
	\[
		A := \{q\in [\kappa_\lambda\smallsetminus p]^{<\omega}\mid\exists z\exists X\langle p\cup q,X\rangle\Vdash\psi(z,y)\}.
	\]
	$A$ is $\Sigma_n^{M_\lambda}(y,p)$, since $\mathbf p\Vdash\psi$ is $\Pi_{n-1}^{M_\lambda}$ if $n\geq 2$ (or $\Delta_1^{M_\lambda}$ if $n=1$.)
	By hypothesis and the truth lemma, there is some extension of $\mathbf p$ in the generic that forces $\exists z\psi(z,y)$.
	Thus let $q\subseteq \vec c\smallsetminus p,X\in F_\lambda,z$ be such that $\mathbf p'=\langle p\cup q, X\rangle\in G_{\vec c}$, $\mathbf p'\Vdash\psi(z,y)$. 
	Hence by indiscernibility, $A$ contains any element of $[C]^{<\omega}$ the same length as $q$, and thus $M_\lambda\vDash A\in (F_\lambda)^k$.
	Hence by Rowbottom in $M_\lambda$, there is $Y\in F_\lambda$ with $[Y]^k\subseteq A$. 
	For such a $q\in A$ let $Y$ be the $M_\lambda$-least such set, so that $Y$ is $\Sigma_n$-definable in $M_\lambda$ from $y$ and $p$.
	Then, if $q\in[Y]^k$ for some $k$, let $z^q, X^q$ be witnesses that $q\in A$.

	The map $q\mapsto X^q$ is therefore $\Sigma_n(y,p)$-definable, allowing us to define:
	\[
		Y' = \Delta_q(X^q\smallsetminus q)\cap Y.
	\]
	$Y'$ is then the diagonal intersection of measure 1 sets and is $\Sigma_n$-definable, so $M_\lambda\vDash Y'\in F_\lambda$.
	$Y'$ also is such that if $q\in[Y']^k$ then $Y'\smallsetminus q\subseteq X^q$, since suppose $\alpha\in Y'\smallsetminus q$ (hence $\alpha > \sup q$), then $\alpha\in X^{q'}\smallsetminus q'$ for all $q'$ with supremum smaller than $\alpha$. But $\sup q < \alpha$ so $\alpha\in X^q$.
	But we have now established that, for any $\langle p\cup q, \bar Y\rangle\leq\langle p,Y'\rangle$, we have $\langle p\cup q, \bar Y\rangle\leq\langle p\cup q,X^q\rangle\Vdash\psi(z^q,y)$ and we are done with the forward direction.

	Hence suppose the latter.
	Then let $\mathbf p = \langle p, Y\rangle$ be the $M_\lambda$-least such condition, hence $\Sigma_n$-definable in $M_\lambda$ from $y, p$.
	But by indiscernibility in the form of Lemma \ref{lem:contains-all-indiscernibles}, $Y\supseteq C\smallsetminus p$, so $\mathbf p\in G_{\vec c}$ by definition of the latter, and we are done.
\end{proof}

As mentioned above, we need to check that the generic extension is a model of \nkp.

\begin{thm}
	\label{thm:forcing-preserves-separation}
	$M_\lambda[\vec c]\vDash\nkp$.
\end{thm}
\begin{proof}
	Since $\kappa_\lambda$ is the largest cardinal in $M_\lambda$, we only need to consider functions defined (partially) on $\kappa_\lambda$.
	Thus for any function $f$ with $\dom f\subseteq\kappa_\lambda$ defined by $f(\xi) = y\Leftrightarrow \exists z\varphi(y,\xi,z,d)$ (where $\varphi\in\Pi_{n-1}^{M_\lambda[\vec c]}$ and $d$ is some parameter, thus $\varphi$ may refer to the generic via the predicate symbol $\dot G$) we wish to prove:
	\begin{align*}
		M_\lambda[\vec c]\vDash \exists\zeta\forall\xi\in\kappa_\lambda(\xi\in\dom f\rightarrow f(\xi) < \zeta).
	\end{align*}
	Thus let $\mathbf p=\langle p, X\rangle=\langle\{ c_0,\ldots,c_k\},X\rangle\in G_{\vec c}$ be a condition which forces the following:
	\[
		f\text{ is a function}\wedge \ran f\subseteq On.
	\]
	The parameter $d$ must, in $M_\lambda$, be definable from $\vec a\in\ran\pi$ and finitely many indiscernibles, $\vec e\in[C]^{<\omega}$ since $M_\lambda$ is an iterated ultrapower model.
	By strengthening $\mathbf p$ if necessary, assume $\max\vec e\leq\max p$. For $\xi\leq\max p$, define $y_\xi$ to be such that:
	\[
		\exists X_\xi (\langle p, X_\xi\rangle\Vdash f(\xi) = y_\xi)
	\]
	wherever such a $y_\xi$ exists. We claim this defines a $\Sigma_n^{M_\lambda}(p,d)$ function defined on $\dom f\cap(\max p+1)$.
	First note that, if there is such a $X_\xi$, then any other $\langle p,X'_\xi\rangle$ is compatible with $\langle p,X_\xi\rangle$ and hence $y_\xi$ is well defined.
	
	For the domain, suppose $M_\lambda[\vec c]\vDash \xi\in\dom f$.
	Then by Lemma \ref{lem:forcing-thm} there is an $X'\in F_\lambda$ such that $\langle p, X'\rangle\Vdash\xi\in\dom f$. Hence $\langle p, X'\cap X\rangle\Vdash\xi\in\dom f$, i.e. $\langle p, X'\cap X\rangle\Vdash\exists y f(\xi) = y$, and so $y_\xi$ must be defined.

	The map $\xi\mapsto y_\xi$ is $\Sigma^{M_\lambda}_n(p,d)$ and so by \nkp has a bound, let's say:
	\[
		\forall\xi\in\dom f(\xi\leq\max p\rightarrow y_\xi <\tau_p).
	\]
	Hence if $\xi\in\dom f,\xi\leq\max p$, we have $\langle p,X_\xi\rangle\Vdash f(\xi)=y_\xi$ and $\langle p, X_\xi\rangle\Vdash f(\xi)<\tau_p$.
	Let $D=\dom f\cap (\max p+1)$, a $\Sigma_n$ element of $M_\lambda$ by $\nkp$.
	Define:
	\begin{align*}
		A=\{r\in[\kappa_\lambda\smallsetminus p]^{<\omega}\mid \exists \tau_{p\cup r},Y_r\forall\xi\in D(\langle p\cup r, Y_r\rangle\Vdash f(\xi)<\tau_{p\cup r})\}.
	\end{align*}
	Now, $A$ is $\Sigma_n$ definable and for each $l$ we know we can find some $r=\langle c_{k+1},\ldots,c_{k+l}\rangle$ such that $r\in A$, since we can find the corresponding $\tau_{p\cup r}$ as we found $\tau_p$.
	Thus by indiscernibility, $A\in (F_\lambda)^l$.
	But then by Rowbottom in $M_\lambda$, there is a set $A_l\in F_\lambda$ such that $[A_l]^l\subseteq A$, and since all parameters in its definition are at most $\max p$, $A_l\supseteq C\smallsetminus p$.
	Then if we let $\tilde A=\bigcap_l A_l$, we have that $\tilde A\supseteq C\smallsetminus p$.
	By \nkp, let $\tau$ bound all the ordinals $\tau_{p\cup r}$ for $r\in[\tilde A]^{<\omega}$.
	
	Hence $\mathbf{\tilde p}:=\langle p, \tilde A\cap X\rangle\in G_{\vec c}$ and for any $\xi\in\dom f$, if $\langle p\cup q, B\rangle$ is an arbitrary extension of $\mathbf{\tilde p}$, we can find some $r\in [B]^{<\omega}$ (and so $q\cup r\in [\tilde A]^{<\omega}$) such that $\xi\leq\max r$ and so $\langle p\cup q\cup r, Y_{q\cup r}\rangle\Vdash f(\xi)<\tau_{p\cup q\cup r}<\tau$.
	Hence $\mathbf{\tilde p}\Vdash\sup\ran f<\tau$ and we are done.
\end{proof}

\begin{thm}
	If $M$ is the $<_*$-least mouse such that $M\vDash\nkp$, then $M_\lambda[\vec c]$ is the smallest transitive model of 
	$\nkp$ containing $\vec c$ as an element. Thus $M_\lambda[\vec c]=\mathcal A_{\nkp}[\vec c]=L_{\wt\theta}[\vec c]$.
\end{thm}
\begin{proof}
	By Separation in $M_\lambda[\vec c]$, we have that $\vec c$ is an element of $M_\lambda[\vec c]$.
	All that remains to prove is minimality.
	Let $N$ be a transitive model of \nkp with $\vec c\in N$.
	We claim that it is sufficient to consider $N$ of the form $J_\alpha^{\vec c}$:
	Otherwise, if $J_\alpha^{\vec c}\not\vDash\nkp$ then there is a $\Sigma_n$-definable partial function definable over $J_\alpha^{\vec c}$ unboundedly into the ordinals of $J_\alpha^{\vec c}$.
	But this function would then be unbounded and $\Sigma_n^{\bar N}$ for any admissible set $\bar N$ with $\vec c\in \bar N$ of height $\alpha$ (since the $J^{\vec c}_\beta$ hierarchy is $\Delta_1$ over any structure containing $\vec c$) including $N$ if $On\cap N=\alpha$.

	Now, we defined $M_\lambda[\vec c]$ to be $J_{\wt\theta}^{\vec c}$.
	Hence we need to show that $J_\alpha^{\vec c}$ is not a model of \nkp for $\alpha<\wt\theta$.

	But by leastness of $M$, $M_\lambda$ is also the least $J_\alpha^{\wt F}$ such that $J_\alpha^{\wt F}\vDash\nkp$.
	Thus if $\alpha<\wt\theta$, $J_\alpha^{\wt F}$ is not a model of $\nkp$ and hence $J_\alpha^{\vec c}$ cannot be a model of \nkp.

	Since $J_{\wt\theta}^{\vec c}$ is admissible, $\wt\theta=\omega\alpha$ and hence in fact $M_\lambda[\vec c]=L_{\wt\theta}[\vec c]$.
\end{proof}

\begin{proof}[Proof of Theorem \ref{thm:nkp-mouse-implies-gamma-n}]
	Since the class $C$ of iteration points is a club, we just have to prove the indiscernibility.

	Here we abbreviate $\mathcal A_{\nkp}[\vec c]$, defined in Definition \ref{dfn:generating-indiscernibles}, by $\mathcal A_n[\vec c]$.
	Since a sentence of the language of $\mathcal A_n[\vec c]$ or $\mathcal A_n[\vec d]$ is a formula of set theory possibly mentioning the predicate symbol $\dot G$, it is sufficient to take such a $\Sigma_n$ sentence $\varphi$ and show that $\mathcal A_n[\vec c]$ and $\mathcal A_n[\vec d]$ agree on $\varphi$.
	Let $M_\lambda$ (respectively $M_{\lambda'}$) be the iterate whose measurable cardinal is the supremum of $\vec c$ (respectively $\vec d$.)
	Then, using that forcing $\varphi$ is $\Sigma_n$:

	\begin{align*}
		\mathcal A_n[\vec d]\vDash\varphi
			&\Leftrightarrow M_{\lambda'}\vDash\exists Y(\langle \varnothing, Y\rangle\Vdash\varphi)
				&&\text{By Lemma \ref{lem:forcing-thm}}\\
			&\Leftrightarrow M_\lambda\vDash\exists Y(\langle \varnothing, Y\rangle\Vdash\varphi)
				&&\text{By $\Sigma_n$-elementarity}\\
			&\Leftrightarrow \mathcal A_n[\vec c]\vDash\varphi
				&&\text{By Lemma \ref{lem:forcing-thm}.}
	\end{align*}
\end{proof}

\section{\texorpdfstring{\ZFC}{ZFC}}

The natural extension of these results to stronger mice will also be interesting.
In this section, $M$ is a non-trivial \ZFC-mouse with normal measure $F$.
We will prove:

\begin{thm}
	\label{thm:zfc-mouse-indiscernibles}
	If there is a non-trivial mouse $M\vDash\ZFC$ then there is a class $C$ of $\Sigma_\omega$ generating indiscernibles for \ZFC.
\end{thm}

Thus these are full indiscernibles and any two models generated by an $\omega$-sequence of them will be fully elementarily equivalent.

The procedure here is much simpler than in the preceding case. Since $M\vDash\ZFC$ again in the language of set theory 
augmented with a predicate for the filter $F$ on $\kappa$, $F$ is a definable set in $M$ by Separation and Power Set. Thus 
the forcing is a set forcing and we can dispense with most of the analogues of the previous section. Define again 
$M_\lambda$ to be the $\lambda$th iterate and have measurable cardinal $\sup\vec c$.

\begin{lem}
	$M_\lambda[\vec c] = \mathcal A_\omega[\vec c]$, the smallest transitive model of \ZFC with $\vec c\in \mathcal A_\omega[\vec c]$.
\end{lem}
\begin{proof}
	Since the forcing is set-sized, this is standard: we know that $M_\lambda[\vec c]\vDash\ZFC$, $\vec c\in 
	M_\lambda[\vec c]$, and that $M_\lambda[\vec c]$ is the smallest transitive model of \ZFC containing $\vec c$.
\end{proof}

\begin{proof}[Proof of Theorem \ref{thm:zfc-mouse-indiscernibles}]
	Let $\varphi$ be any sentence in the language of $\mathcal A_\omega[\vec c]$. Then since we have full elementarity 
	between any two iterates of $M$, 
	\[
		\mathcal A_\omega[\vec c]\vDash\varphi\iff M_\lambda\vDash(\Vdash_\P\varphi)
		\iff M_{\lambda'}\vDash(\Vdash_\P\varphi)
		\iff A_\omega[\vec d]\vDash\varphi
	\]
	using the same notation as in the proof of Theorem \ref{thm:nkp-mouse-implies-gamma-n}.
\end{proof}

\section{Preserving substructures}

Now let us abandon our previous notation and fix $M=\langle J^F_\alpha,F\rangle$ to be an admissible structure of the kind discussed in \cite{Welch96}, that is, the $Q$-structure of a ``clever'' mouse, $M_0$.
Throughout this section, ``admissible'' will mean a transitive model of \KP in the language containing the predicate $F$.
Then $M\vDash\KP\wedge``F\text{ is a normal measure on $\kappa$''}$ and $M$ satisfies the following $\Sigma_1$-Rowbottom property:
\[
	\{\xi<\kappa\mid M\vDash\varphi(\xi,p)\}\in F^{M_0} \implies\exists\tau<\alpha \{\xi<\kappa\mid J^F_\tau\vDash \varphi(\xi,p)\}\in F^M
\]
for $\Sigma_1$ formul\ae{} $\varphi$.
Under these conditions it was proved in \cite{Welch96} that Prikry forcing over $M$ preserves \KP.
Originally this was used under the hypothesis that $M_0$ is the least clever mouse, yielding that $M_\lambda[\vec c]$ (the forcing extension of the $Q$-structure of the $\lambda$th iterate of $M_0$) is the least admissible set containing $\vec c$ as an element.
If $M_0$ is, say, the second clever mouse, then the situation changes as we would hope.

\begin{lem}
	Let $\kappa_\lambda\in N\in M_\lambda$ with $N$ admissible. Let $D\in N$ be predense in $N$, in the partial order $(\P)^N$.
	Then $D$ is predense in $(\P)^{M_\lambda}$. Hence $\vec c$ is $\P$-generic over $N$
\end{lem}
\begin{proof}
	So let $\mathbf p=\langle p,X\rangle\in(\P)^{M_\lambda}$, and we need to find $\mathbf r\leq \mathbf q\in D$ such that $\mathbf r\leq \mathbf p$.
	Now, define the set of extensions of $p$ which extend an element of $D$:
	\[
		H = \{r\in[\kappa\smallsetminus p]^{<\omega}\mid \exists \langle q, Y_r\rangle\in D(p\cup r\supseteq q\wedge p\cup r\setminus q\subseteq Y_r)\}.
	\]
	This is $\Delta_1^{N}(D,p)$, so by admissibility, $H\in N$.
	By Rowbottom, $\exists Z\in N\cap {F_\lambda}$ such that either $[Z]^{<\omega}\subseteq H$ or $[Z]^{<\omega}\cap H=\varnothing$.
	The latter case is impossible since, considering the condition $\langle p,Z\rangle\in (\P)^N$, by predensity there is $r\in [Z]^{<\omega}$ and some $Z'\subseteq Z$ such that $\langle p\cup r, Z'\rangle$ extends an element of $D$.
	That is to say, $\langle p\cup r,Z'\rangle\leq\langle q,Y_r\rangle$ for some condition $\langle q,Y_r\rangle\in D$, i.e. $r\in H$.

	But now $\langle p,Z\cap X\rangle$ is a condition in $M_\lambda$ such that, for any $r\in Z\cap X$, the condition $\mathbf r:=\langle p\cup r,Z\cap X\cap Y_r\rangle\leq\langle p\cup r,Y_r\rangle$ extends a condition in $D$, so $\mathbf p$ is compatible with an element of $D$, which is what we wanted.
\end{proof}

\begin{lem}
	If $N$ in addition is the $Q$-structure of a clever mouse then $M_\lambda[\vec c]\vDash\exists A(A\text{ is 
		admissible})$.
\end{lem}
\begin{proof}
	By the results of \cite{Welch96}, Section 3, $N[\vec c]\in M_\lambda[\vec c]$ is admissible. 
\end{proof}

Now, if $N,M$ are the least and second-least admissible-in-$F$ structures containing $\kappa_\lambda$ as an element, we want to know that $N_\lambda[\vec c], M_\lambda[\vec c]$ are similarly minimal.
For $N_\lambda[\vec c]$ this is already known, and for $M_\lambda[\vec c]$ it is a minor modification:

\begin{lem}
	If $N,M$ are as in the preceding paragraph then $M_\lambda[\vec c]\vDash\text{``$N_\lambda[\vec c]$}$ is the only admissible set $\bar N$ with $\vec c\in\bar N$''.
\end{lem}
\begin{proof}
	Suppose not, i.e. there is a $\gamma\in On\cap M_\lambda[\vec c]$ such that $\gamma>\kappa_\lambda$, $J_\gamma^{\vec c}\vDash\KP$ and $N_\lambda[\vec c]\in J_\gamma^{\vec c}$.
	Then as in the proof of the corresponding Lemma 3.10 of \cite{Welch96}, $J^F_\gamma$ ($F$ the measure on $\kappa_\lambda$) is $Q^{N'}_{\kappa_\lambda}$ for some mouse $N'\in H^{M_\lambda}_{\kappa_\lambda}$.
	$N<_*N'<_*M$, so $N'$ is therefore not clever as there are no clever mice between $N$ and $M$.
	Then obtain a contradiction with the admissibility of $J^{\vec c}_\gamma$ as in \cite{Welch96}.
\end{proof}

\begin{thm}
	\label{thm:two-admissibles-indiscernibles}
	If $M$ is as above, there is a closed-unbounded class of $\Sigma_1$ generating indiscernibles for the theory ``$T=\KP +{}$there is an admissible set $N$ with $\vec c\in N$.''
\end{thm}
\begin{proof}
	Thus with this theory $T$, the model $\mathcal A_T[\vec c]$ is the {second}-least admissible set containing $\vec c$.

	The proof is exactly the same as the above indiscernibility proofs, relying on the fact that forcing relation for $\Sigma_1$ sentences is $\Sigma_1$ over $M_\lambda$, as proved in \cite{Welch96}, and that the iterates are $\Sigma_1$ elementary.
\end{proof}
\let\bibliography\oldbibliography
\let\section\oldsection
\let\subsection\oldsubsection

\section{Determinacy from Indiscernibility}
\label{sec:Determinacy from Indiscernibility}

\let\oldbibliography\bibliography
\def\bibliography#1{\relax}
\let\oldsection\section
\let\oldsubsection\subsection
\let\section\subsection
\let\subsection\subsubsection

Theorems \ref{thm:nkp-mouse-implies-gamma-n}, \ref{thm:zfc-mouse-indiscernibles} and \ref{thm:two-admissibles-indiscernibles} provide the indiscernibles that are used, via the usual argument, to prove determinacy in this section.
From now on, fix a pointclass $\Gamma$, $n\in\omega$ and a theory $T$, and suppose the hypotheses of Theorem \ref{thm:ind-to-det} are satisfied, letting $C$ be the set of generating indiscernibles.

First of all we need a kind of remarkability for generating indiscernibles.
For $\vec c$ a set of ordinals, let $c_m$ denote the $m$th ordinal in the increasing enumeration of $\vec c$.

\begin{lem}
	\label{lem-remarkability}
	Suppose:
	\begin{enumerate}
		\item $\varphi(x)$ is a $\Sigma_n$-formula with one free variable and there is a $m\in\omega$ such that, for any $\vec c\in[C]^\omega$, $\varphi$ defines an ordinal $\gamma$ over $\mathcal A_T[\vec c]$ with $c_{m-1}\leq\gamma<c_m$;
		\item $\vec c,\vec d\in[C]^\omega$ with $\{i\in\omega:c_i\neq d_i\}$ finite and $c_i=d_i$ for $i<m$.
		\item $\mathcal A_T[\vec c]\vDash\varphi(\gamma_1)$ and $\mathcal A_T[\vec d]\vDash\varphi(\gamma_2)$.
	\end{enumerate}
	Then $\gamma_1=\gamma_2$.
\end{lem}

\begin{proof}
	Without loss of generality suppose that $c_m<d_m$.
	Assume further that $\vec c$ and $\vec d$ are cardinals and hence limit points of $C$.
	Doing so does not affect the result; if $\gamma_1\neq\gamma_2$ satisfied the conditions of the Lemma, ``spreading out'' $\vec c$ and $\vec d$ does not change this.
	Let $\vec e=\vec c\cup\vec d$.
	By 2., $e_i=c_i=d_i$ for $i<m$, and $\vec c,\vec d$ are definable in $\mathcal A_T[\vec e]$ (from $\vec e$, which is in our language) by just writing down the points where $\vec c$ and $\vec d$ differ, of which there are finitely many.

	They are hence $\Delta_0$ definable from $\vec e$, and so $\mathcal A_T[\vec c],\mathcal A_T[\vec d]$ are $\Sigma_1$-definable inner models of $\mathcal A_T[\vec e]$.
	Thus the following is expressible as a $\Sigma_n$-sentence of $\mathcal A_T[\vec e]$:
	\[
		\exists \gamma_1<c_m\exists\gamma_2<d_m
		\left(
			\mathcal A_T[\vec c]\vDash\varphi(\gamma_1) \wedge\mathcal A_T[\vec d]\vDash\varphi(\gamma_2) \wedge \gamma_1<\gamma_2
		\right)
	\]
	as is the same sentence but with $\gamma_1>\gamma_2$ instead of $\gamma_1<\gamma_2$.
%

	So, first suppose that $\gamma_1>\gamma_2$, and let $\vec {d'}\in [C]^\omega$ have the same order relationship with $\vec {d}$ as $\vec d$ does with $\vec c$.
	Note this is possible because $\vec c$ and $\vec d$ consist of limit points of $C$ and hence there is enough room to find such a $\vec d'$ from the remaining elements of $C$.
	By indiscernibility we also have $\gamma_2>\gamma_2'$ where $\gamma_2'$ is defined in $\mathcal A_T[\vec e]$ by $\varphi(\gamma_2')^{\mathcal A_T[\vec {d'}]}$.
	Hence by picking successive $\vec d^i$s in the same way, we end up with a decreasing sequence of ordinals --- contradiction.

	Now suppose that $\gamma_1<\gamma_2$.
	There are two further sub-cases to consider, the first being when $\gamma_2<c_j$ for some $j$.
	In this case, we can consider successive $\vec {d'},\vec {d''},\ldots$ where $\vec {d'},\vec d$ have the same order relations as $\vec d,\vec c$ and so on.
	Hence $\gamma_2'$ (defined as before) is greater than $\gamma_2$.
	But we can do this greater than $c_j$-many times as $C$ is a proper class, after which we necessarily have $\gamma_2^{(c_j)}>c_j$.
	But this is impossible by indiscernibility.

	Finally suppose $\gamma_2>c_i$ for all $i$.
	This is still expressible as a $\Sigma_n$ formula inside $\mathcal A_T[\vec e]$ and so this time take successive sequences $\vec {c'}$ so that $\sup c_i'$ approaches $d_i$.
	Thus we can increase $\vec {c'}$ $d_n$-many times, so that $\gamma_2$, which is less than $d_n$, must be less than some $c_i'$, which cannot happen by indiscernibility.

	Thus the only possibility we are left with is the one we wanted: $\gamma_1=\gamma_2$.
\end{proof}

We now commence with the proof of Theorem \ref{thm:ind-to-det}, following the method established by Martin for proving determinacy in the $\ca$ difference hierarchy.
Suppose $A\subseteq\bs$ is $\omega^2\hyp\ca+\Gamma$ and let $\langle B_\beta\mid\beta<\omega^2+1\rangle$ with $B_{\omega^2}\in\Gamma$ witness this. Let $\mathcal A = \mathcal A_T[\langle \aleph_n\mid n<\omega\rangle]$.
We set up an auxiliary game $G^*$ as in, for example, \cite{Martin90}.
For $x\in\bs$, let $\prec_x^\beta$ be a linear ordering of $\omega$ with maximal element 0, such that $\prec_{x\upharpoonright n}^\beta$ depends only on $x\upharpoonright n$ and:
\[ \prec_x^\beta \text{ is a well-ordering if and only if } x\in B_\beta \]
We know such orderings exist by the general theory of $\ca$ and we can in fact make the function $x\mapsto\prec_x^\beta$ recursive.

Next let $\langle \beta,n\rangle\to\xi^\beta_n$ be a bijection between $\omega^2\times\omega$ and $\omega$ such that:
\begin{enumerate}
	\item $\xi^\beta_n$ is even iff $\beta$ is even;
	\item $\xi^\beta_n$ is increasing in $n$ for fixed $\beta$;
	\item For natural numbers $i$ and $a<b$, $\xi^{\omega i+a}_0<\xi^{\omega i+b}_0$
\end{enumerate}
Note this mapping can also be taken to be recursive.

In the auxiliary game $G^*$, each move is a pair $(a_i, \eta_i)$ such that $a_i\in\omega$ and $\eta_i\in\aleph_\omega$:
\begin{displaymath}
\xymatrix@C=1pc@R=1pc{
	I & \langle a_0,\eta_0\rangle \ar@/^_/[dr] & & \langle a_2,\eta_2\rangle \ar@/^_/[dr] & & \cdots \\
	II & & \langle a_1,\eta_1\rangle \ar@/^_/[ur] & & \langle a_3,\eta_3\rangle \ar@/^_/[ur]
}
\end{displaymath}
defining an auxiliary game tree, $T^*$.
The tree is definable in a $\Delta^{\mathcal A}_0$ way, and so by admissibility is an element of $\mathcal{A}$.

For $\beta<\omega^2$ and a play of this game, $x^*=\langle(a_i,\eta_i)\mid i\in\omega\rangle$, define the function $F^\beta:\omega\to\aleph_\omega$ by:
\[ F^\beta(n) = \eta_{\xi^\beta_n} \]
Then we say that a play of $G^*$ is \emph{badly lost} if some $F^{\omega\cdot i+b}$ is not an order-preserving embedding of $\langle\omega,\prec_x^{\omega\cdot i+b}\rangle$ into $\langle\aleph_i,<\rangle$.
A badly lost position is defined in the same way.
If $x^*$ is a badly lost play, then there is a shortest badly lost position $p^*\subseteq x^*$ witnessing that some $F^{\omega\cdot i+b}$ is not order preserving.
We then say $x^*$ is badly lost \emph{for $I$} if the least such $b$ is even, otherwise it is badly lost for $II$, and denote the sets of such plays as $B^I$ and $B^{II}$, respectively.

Then, if $x^*$ is badly lost for $I$, $II$ wins, and vice-versa. If $x^*$ is not badly lost then $I$ wins if and only if $x^*\in B_{\omega^2}$.

This implicitly defines the winning set of the game $G^*$, which we denote $A^*$.
We now use the work of Section \ref{sub:Effective Descriptive Set Theory in Uncountable Spaces} to calculate the complexity of $A^*$.

\begin{lem}
	\label{lem:winning-set-complexity}
	$A^*$ is $\wt\Sigma^0_1\wedge\wt\Pi^0_1$ if $n=1$ and $\wt\Sigma^0_n$ if $n>1$ in the space $[T^*]$.
\end{lem}
\begin{proof}
	$A^*$ is defined as:
	\begin{align*}
		x\in A^* &\iff x\notin B^I \wedge (x\in B^{II} \vee \pi(x)\in A_{\omega^2})
	\end{align*}
	where $\pi$ is the projection function from $[T^*]$ to \bs:
	\[
		\pi(\langle\langle a_0,\eta_0\rangle,\langle a_1,\eta_1\rangle,\langle a_2,\eta_2\rangle,\ldots\rangle)
			= \langle a_0, a_1, a_2, \ldots\rangle.
	\]
	Observe that $\pi$ is continuous, and in fact the relation $p^*\subseteq \pi(x^*)$ (for $p^*\in T^*$, $x^*\in[T^*]$) is generalised semi-recursive. Hence, since $A_{\omega^2}$ is $\wt\Sigma^0_n$ considered as a subset of $[T^*]$ by Proposition \ref{prop:relation-old-new-hierarchies}, the assertion ``$\pi(x)\in A_{\omega^2}$'' is $\wt\Sigma^0_n$ by Lemma \ref{lem:closed-under-recursive-functions}.

	Now, $B^I$ and $B^{II}$ are $\wt\Sigma^0_1$: Let $\varphi(p)$ be the formula:
	\begin{align*}
		\exists b,i &\left(F^{\omega\cdot i+b}_p
			\text{ is not an order-preserving map from $\langle\omega,\prec_p^{\omega\cdot i+b}\rangle$ to $\aleph_i$}\right)\\
		\wedge &\text{ the least such $b$ is even.}
	\end{align*}
	This is $\Delta_1^\mathcal H$ and if $\varphi(p)$ with $p\subseteq q$ then $\varphi(q)$. Then $x\in B^I$ if and only if $\exists m(\varphi(x\upharpoonright m))$, so $B^I$ is $\wt\Sigma^0_1$. Similarly for $B^{II}$.
	Hence $A^*$ is $\wt\Pi^0_1\wedge(\wt\Sigma^0_1\vee\wt\Sigma^0_n)$. If $n=1$ this is $\wt\Pi^0_1\wedge\wt\Sigma^0_1$ and if 
	$n>1$ then this is $\wt\Sigma^0_n$.
	
\end{proof}

Now suppose that, by the hypothesis of Theorem \ref{thm:ind-to-det}, $\sigma^*$ is a $\Sigma_n$-definable winning strategy for $G^*$.

We argue that whichever player has the strategy $\sigma^*$ has a winning strategy for $G$.
The strategy for $G$ is defined by having the player ``pretend'' their opponent is playing $G^*$, using indiscernibility to find that the integer moves returned by $\sigma^*$ are independent of the ordinals they choose to pretend were played.
The auxiliary game is constructed so that the integers played constitute a winning play in $G$, so a strategy so defined will also be winning.
We prove this below in the case when $II$ has a winning strategy; the case for $I$ is identical.

When constructing the pretend moves, we will only be considering sequences where $I$ \emph{plays well}, i.e.
he plays so as not to be badly lost and further:
\begin{enumerate}
	\item $F^{\omega i+2j}(n)>F^{\omega i+j'}(0)$ for $j'<2j$, $i,j\in\omega$;
	\item $F^{\omega i+2j}(n)>\aleph_i$ for $i,j\in\omega$;
	\item $F^{\omega i+2j}(n)\in C$ for $i,j\in\omega$.
\end{enumerate}

We now seek to establish the independence of moves from choice of indiscernibles.
Suppose $\langle \eta_0,\ldots,\eta_{2n}\rangle$ and $\langle \eta'_0,\ldots,\eta'_{2n}\rangle$ are two sequences of indiscernibles and
\[
	p=\langle\langle a_0,\eta_0\rangle,\ldots,\langle a_{2n},\eta_{2n}\rangle\rangle,\qquad p'=\langle\langle
 a_0,\eta'_0\rangle,\ldots,\langle a_{2n},\eta'_{2n}\rangle
\]
are two positions consistent with $\sigma^*$ in which $I$ has played well.
We want that the number components of $\sigma^*(p)$ and $\sigma^*(p')$ agree, and further that the ordinal components match, subject to certain conditions.
To be precise:

\begin{lem}
	\label{lem:moves-independent}
	With $p,p'$ as above, if $\sigma^*(p)=\langle a_{2n+1},\eta_{2n+1}\rangle$ and $\sigma^*(p')=\langle a'_{2n+1},\eta'_{2n+1}\rangle$ then:
	\begin{enumerate}
		\item $a_{2n+1}=a'_{2n+1}$
		\item If $2n+1=\xi^\gamma_r$ and whenever $i\leq n$ such that $2i=\xi^\beta_l$, (for $l,r\in\omega$ and $\beta<\gamma$) we have that $\eta_{2i}=\eta'_{2i}$, then $\eta_{2n+1}=\eta'_{2n+1}$.
That is, the ordinal part of the move $\sigma^*$ outputs is not dependent on $I$'s ordinal moves $\eta_{\xi^\beta_r}$ for $\beta\geq\gamma$.
	\end{enumerate}
\end{lem}
\begin{proof}
This is directly analogous to \cite{DuBose90}, Lemmas 0.8.1, 0.8.2.
Let $\vec\kappa=\langle\kappa_m\mid m<\omega\rangle$ be the increasing enumeration of $\{\eta_{2i}\mid i\leq n\}\cup\{\aleph_i\mid i<\omega\}$, with $\vec\kappa'$ defined analogously but with the $\eta'_{2i}$s.
Also let $J$ be the set $\{j<\omega\mid\exists i(\kappa_j=\eta_{2i})\}$, the indices in the enumeration which are ordinals played by $I$.
Since there have only been finitely many moves so far, this set is finite.
Also, $J=\{j<\omega\mid\exists i (\kappa'_j=\eta'_{2i})\}$ since $I$ plays well, so all his ordinals must be played into the appropriate block at the appropriate turn.

Since the ordinal sequences differ only finitely, $\mathcal A_T[\vec\kappa]$ and $\mathcal A_T[\vec\kappa']$ have the same domains as $\mathcal A$.
In all of these structures, the game $G^*$ is definable from the $\aleph_i$s, and the sequence $\langle\aleph_i\mid i<\omega\rangle$ is definable (uniformly in each structure) from $J$, which is finite.
Likewise, the set of non-losing positions for $II$ is $\Pi_1$ over all these structures (we already showed it is $\Pi_1$ over $\mathcal A$), and is the same set in each.

$II$'s integer move $a_{2n+1}$ is just defined by the $\Sigma_n$ formula $a_{2n+1} = (\sigma^*(p))_0$ (that is, the first component of the output of $\sigma^*$).
Further, $a'_{2n+1}$ is an integer with the same $\Sigma_n$-definition in $\mathcal A_T[\vec\kappa']$, which is $\Sigma_n$-elementarily equivalent to $\mathcal A_T[\vec\kappa]$, so $a_{2n+1}=a'_{2n+1}$.

For the second part we are dealing with ordinals, not integers, and therefore use the remarkability established in Lemma \ref{lem-remarkability}.
Fix $m$ to be the least number such that there exists $i\leq n$ and $\beta>\gamma$ with $\kappa_m=\eta_{2i}=\eta_{\xi^\beta_l}$.
Note that if such an $\eta_{2i}$ does not exist then the result is trivial since all $I$'s ordinal moves were the same between $p$ and $p'$.
Now, if $j<m$ then $\kappa_j=\kappa_j'$ (by hypothesis) and:
\begin{equation}
	\kappa_{m-1} \leq\eta_{2n+1}<\kappa_m; \qquad \kappa'_{m-1}\leq\eta'_{2n+1}<\kappa'_m
\label{eq-using-remarkability}
\end{equation}
recalling that $2n+1=\xi^\gamma_r$.
But, together with the $\Sigma_n$-definition of $\eta_{2n+1}$ in $\mathcal A_T[\vec\kappa]$ from $I$'s moves, which are in turn definable from $\{\kappa_j\mid j\in J\}$.
$\eta'_{2n+1}$ has the same $\Sigma_n$-definition in $\mathcal A_T[\vec\kappa']$ from $\{\kappa'_j\mid j\in J\}$ and so we can apply Lemma \ref{lem-remarkability} and $\eta_{2n+1}=\eta'_{2n+1}$.
\end{proof}

So we define $II$'s strategy in $G$ as usual: Suppose we have defined $\sigma(p)$ for $p$ of length at most $2n$.
Then let $p=\langle a_0,\ldots,a_{2n}\rangle$ be a position in $G$ compatible with $\sigma$ as defined so far, and suppose we can find $p^*=\langle\langle a_0,\eta_0\rangle,\ldots,\langle a_{2n},\eta_{2n}\rangle\rangle$, a position in $G^*$ consistent with $\sigma^*$ where $I$ has played well.
Then, set $\sigma(p) = (\sigma^*(p))_0$.
That this is well-defined will be shown in the following lemma:

\begin{lem}
	\label{lem-II-winning}
	If $\sigma^*$ is a winning strategy for $II$ in $G^*$ from the perspective of $\mathcal A$, then $\sigma$ is, in $V$, winning for $II$ in $G$.
\end{lem}
\begin{proof}
	Let $x=\langle a_0,a_1,\ldots\rangle$ be a play in $G$ consistent with $\sigma$.
	If we can show that for any even $\gamma<\omega^2$, $x\in\bigcap_{\beta\leq\gamma}B_\beta\rightarrow x\in B_{\gamma+1}$, and that if $x\in\bigcap_{\beta<\omega^2}B_\beta$ then $x\in B_{\omega^2}$, then we will have shown that $x\notin A$, which is what we want.

	We do this by constructing the ordinals $F^\beta(i)$ in such a way that each $F^\beta$ is order-preserving.
	Inductively assume we have defined $F^{\beta'}(i)$ for all $i$ and all $\beta'<\beta$, for some $\beta\leq\gamma+1$, and we will define the ordinals $F^\beta(i)$.

	Start by picking $I$'s ordinals, i.e. when $\beta$ is even:
	For each $i<\omega$ and $\xi=\sup_{\beta'<\beta} F^{\beta'}(0)$, let $F^{\beta}:\omega\to\aleph_{k+1}$ (where $\omega\cdot k\leq\beta<\omega\cdot{k+1}$) embed $\prec_x^\beta$ order-preservingly into $C\cap\aleph_{k+1}\setminus\xi$, i.e. let $I$ play well.
	We know this is possible because $x\in B_\beta$ and $C$ is closed and unbounded below $\aleph_{k+1}$.

	To pick $II$'s ordinals, i.e. when $\beta$ is odd, we use $\sigma^*$ inductively.
	Let $p^*$ be the position of length $\xi^\beta_m$ whose number components come from $x$ and whose ordinal components are picked as follows:  
	$\eta_i$ is already defined by induction for $i=\xi^{\beta'}_{m'}$ if $\beta'<\beta$.
	Otherwise if $i$ is odd, $\sigma^*$ specifies $\eta_i$, whilst if it is even we let $\eta_i$ be an arbitrary element of $C$, maintaining that $I$ plays well.
	This latter condition can always be met since $C$ is closed and unbounded below each uncountable cardinal.
	Now, by Lemma \ref{lem:moves-independent}, $\sigma^*(p^*)$ is the same regardless of the arbitrary indiscernibles we choose for $I$, so is well-defined, and we define $\eta_{\xi^\beta_m}$ as the ordinal component of $\sigma^*(p^*)$.

	We have now defined all ordinals $\eta_i^{\beta}$ for $\beta\leq\gamma+1$.
	For $\beta=\gamma+1$, since $\sigma^*$ is winning in $\mathcal A$, this tells us that the function $F^{\gamma+1}:\prec^{\gamma+1}_{x\upharpoonright k}\to \aleph_\omega$ given by $i\mapsto\eta^{\gamma+1}_i$ is order-preserving for each $k$; otherwise the shortest $x\upharpoonright k$ where this was not possible would witness that $II$ lost badly while following $\sigma^*$.
   	Since $\sigma^*$ is winning, this cannot happen.
   	Hence $\prec^{\gamma+1}_x = \bigcup_{i\in\omega}\prec^{\gamma+1}_{x\upharpoonright i}$ can be mapped order-preservingly into the ordinals, and $x\in B_{\gamma+1}$ as desired.

	All that is left is then to show that if $x$ is in all the $B_\beta$s up to $\omega^2$, then $x\in B_{\omega^2}$.
	This is true in $\mathcal A$ since $\sigma$ is defined from $II$'s winning strategy $\sigma^*$, so we just need to know that it is true in $V$.
	But the sentence ``there exists a real $x$ consistent with $\sigma$ in all $B_\beta$s except $B_{\omega^2}$'' is $\Sigma^1_2(\sigma)$, and thus by \ref{thm:generalised-shoenfield} is absolute for $\mathcal A$.
	Hence if $\sigma$ were not winning in $V$, it would not be winning in $\mathcal A$, which is a contradiction.

	Hence $x\notin A$, and $\sigma$ is winning.
\end{proof}

The corresponding proof for $I$ is all but identical, and thus we have proved Theorem \ref{thm:ind-to-det}.
\let\bibliography\oldbibliography
\let\section\oldsection
\let\subsection\oldsubsection

\section{Individual Determinacy Proofs}
\label{sec:Individual Determinacy Proofs}

To complete the proof of the main theorem, we need to show that determinacy of the class $\wt\Gamma$ holds in the models $\mathcal A_T[\vec c]$, for the relevant theories $T$.
The class $\wt\Gamma$ is precisely the class given by Lemma \ref{lem:winning-set-complexity}, and we now prove the determinacy of each of these classes in the corresponding model, which are the forcing extensions found in Section \ref{sec:Obtaining Indiscernibles}.

\subsection{\texorpdfstring{$\Det(\wt\Sigma^0_1\wedge\wt\Pi^0_1)$}{Det tilde-Sigma-0-1 and tilde-Pi-0-1}}
\label{sub:Sigma-0-1}

\let\oldbibliography\bibliography
\def\bibliography#1{\relax}
\let\oldsection\section
\let\oldsubsection\subsection
\def\section#1{\let\section\subsubsection}
\let\subsection\subsubsection
\begin{lem}
	Let $N$ be admissible with $T^*\in N$, and $\tau$ be the canonical winning strategy for $II$ in an open game $G(A;T^*)$.
	Then
	\begin{enumerate}
		\item $\tau$ is $\Sigma_1\wedge\Pi_1$-definable over $N$; and
		\item is still winning in any admissible $M\ni N$.
	\end{enumerate}
\end{lem}
\begin{proof}
	Recall that, if $II$ has a winning strategy in an open game, the canonical one is given by never playing so as to end up in a \emph{ranked} position, where the rank of a position $p$ is 0 if it is already lost for $II$ (i.e. already in one of the basic open sets making up the winning set) and otherwise:
	\[
		\rk(p) = \mu.\xi(\exists a\forall b(p\concat\langle a,b\rangle \in T^* \wedge \rk(p\concat\langle a,b\rangle)<\xi)).
	\]
	By the recursion theorem, $\rk$ is a $\Sigma_1^\KP$ function.
	$\tau$ is then defined by 
	\[
		\tau(p)=a\leftrightarrow a\text{ is not ranked}\wedge\forall a'<_N a(a\text{ is ranked}).
	\]
	``$a$ is ranked'' is $\Sigma_1$ (it is equivalent to $\exists\xi\rk(a)=\xi$) so this is $\Pi_1\wedge\Sigma_1$, as desired for the first claim.

	For the second claim, let $N\in M$ with the latter admissible.
	If $\tau$ is not winning in $M$ then some position $p$ consistent with $\tau$ must have a game rank in $M$ but not in $N$, so suppose $p$ is such a position, with rank least.
	Then by definition,
	\[
		M\vDash\exists a\forall b(\rk(p\concat\langle a,b\rangle) < \rk(p)).
	\]
	But, fixing $a$, any such $p\concat\langle a,b\rangle$ must thus also have a rank in $N$, since $p$ is the rank-minimal position such that $p$ does not have a rank in $N$.
	Hence by admissibility $p$ can be ranked in $N$ as $\sup_{b}\rk(p\concat\langle a,b\rangle)$, which is a contradiction.
\end{proof}

The following is essentially from \cite{Tanaka1990} but translated into a set-theoretical form. The original proof uses 
$\Pi^1_1$ comprehension in the $Z_2$ context, and our hypothesis essentially gives us $\wt\Pi^1_1$ comprehension for the 
second admissible above the game-tree.
\begin{thm}
	Let $M$ be an admissible set with $T^*\in M$ such that
	\[
		M\vDash\exists M_0(\Trans(M_0)\wedge T^*\in M_0\wedge(\mathsf{KP})^{M_0}).
	\]
	Then any $\wt\Sigma^0_1\wedge\wt\Pi^0_1$ game has a winning strategy definable over $M$ by a boolean combination of $\Sigma_1$ formul\ae.
	Hence a further admissible set containing $M$ will contain the winning strategy as an element.
\end{thm}
\begin{proof}
	Let $A$ be $\wt\Sigma^0_1$ and $B$ be $\wt\Pi^0_1$, and fix the game $G$ where player $I$ is trying to get into $A\cap 
	B$. Now a winning strategy for the game with winning set $A$ (and likewise for $B$) is definable over the admissible 
	$M_0$ via a game-rank argument, but depending on who wins, is not necessarily a member.

	For $p\in T^*$, $\plays{T^*_p}$ is the open neighbourhood of $p$. Suppose player $I$ has no winning strategy to play 
	into $A\cap B$, and define the set $z\subseteq T^*$:
	\[
		z = \{p\in T^* \mid |p| \text{ is even} \wedge \plays{T^*_p}\subseteq A \wedge \exists\sigma (\text{$\sigma$ is a 
			w-s for $I$ into $B\cap \plays{T^*_p}$})\}.
	\]
	$z$ is thus a $\wt\Sigma^1_1$ generalised real and an element of $M$ by the generalised Spector-Gandy theorem. The class of 
	extensions of elements of $z$ is $\wt\Sigma^0_1(z)$ and thus has a winning strategy $\Pi_1\wedge\Sigma_1$-definable over 
	$M$ since $z\in M$.  In fact the strategy must be for $II$, since a winning play for $I$ would also be in $A\cap B$, and 
	$I$ has no winning strategy there, by assumption. Call $II$'s strategy $\tau_0$.

	Define
	\[
		y = \{ p\in T^*\mid |p|\text{ is even} \wedge \plays{T^*_p}\subseteq A\wedge p\notin z\}.
	\]
	So if $p\in y$ then by open determinacy, $II$ has a winning strategy to play out of $B\cap\plays{T^*_p}$. Let $\tau_p$ 
	be this strategy for each such $p$. Such strategies are definable over $M_0$, thus $\Delta_1$-definable elements of $M$ 
	and so $p\mapsto\tau_p$ is a $\Sigma_1^M$ function by admissibility.

	Then we define a winning strategy $\tau^*$ for $II$ by setting $\tau^*(p) = \tau_0(p)$ whenever 
	$\plays{T^*_p}\not\subseteq A$, and $\tau^*(p\concat r) = \tau_p(r)$ when $p$ is the shortest position such that 
	$\plays{T^*_p}\subseteq A$. Thus if $\tau_0$ leads $II$ to a position $p$ which will end up in $A$, she switches to 
	$\tau_p$ which will keep her out of $B$ forever, thus winning. Since $\tau_0$ and $\tau_p$ are definable over $M$, so is 
	$\tau$.
	\[
		\tau(p) =
		\begin{cases}
			\tau_0(p) &\text{if } \plays{T^*_p}\not\subseteq A;\\
			\tau_q(p) &\text{otherwise, where $q\subseteq p$ is shortest such that } \plays{T^*_q}\subseteq A.
		\end{cases}
	\]
	This is definable over $M$ by a boolean combination of $\Sigma_1$ formul\ae.
	If $N$ is an admissible set with $M\in N$ then $\tau_0,\tau_p$ will be in $N$, and still be winning for their respective subgames.
	Hence $\tau\in N$ and is also winning.
\end{proof}
\let\bibliography\oldbibliography
\let\section\oldsection
\let\subsection\oldsubsection

Thus, since starting with the second-least clever mouse yields generating indiscernibles for such structures $M$, Theorem \ref{thm:ind-to-det} applies, and we have proved Theorem \ref{thm:main} part 1.

\subsection{\texorpdfstring{$\Det(\wt\Sigma^0_2)$}{Det tilde-Sigma-0-2}}
\label{sub:Sigma^0_2}

\let\oldbibliography\bibliography
\def\bibliography#1{\relax}
\let\oldsection\section
\let\oldsubsection\subsection
\def\section#1{\let\section\subsubsection}
\let\subsection\subsubsection
\section{Wolfe's proof}

We show that if $A$ is $\wt\Sigma^0_2$ then $G(A;T^*)$ is determined in transitive models of \skp.
Let $T$ be \skp, then let $\mathcal A_T[\vec c]$ be the least transitive model of $T$ containing $\vec c$ as an element, where we assume $\{\aleph_i\mid i\in\omega\}\subseteq\vec c$.
Being a model of \skp{} implies that there are admissibles $N_i$ such that:
\[
	N_1\prec_{\Sigma_1}N_2\prec_{\Sigma_1}\ldots\prec_{\Sigma_1} \mathcal A_T[\vec c].
\]
Being a minimal transitive model implies that $\mathcal A_T[\vec c]$ has a parameterless $\Sigma_1$ Skolem function.

The determinacy proof is just the same as usual, first proved by Wolfe in \cite{Wolfe55}, and also found in \cite{MartinUP} as Theorem 1.3.3., but we have to pay attention to how much separation and replacement we're using.
For convenience, fix $\mathcal A$ to be $\mathcal A_T[\vec c]$ for some arbitrary $\vec c$ (for example, the sequence of $\aleph_i$s).

We first need to know the complexities of some standard concepts. Most importantly, ``being a winning strategy'' is $\Delta_1$ in a limit of admissibles:

\begin{lem}
	\label{lem:winning-is-delta-1}
	The sentence ``$\sigma$ is a winning strategy for $G(A;T^*)$'' (for $I$, $II$ or either player) is $\Delta_1$ over any admissible $M$ satisfying:
	\begin{enumerate}
		\item $T^*\in M$;
		\item $\exists N\in M(N\text{ is admissible and }\sigma\in N)$
	\end{enumerate}
	Hence the sentence ``there exists a winning strategy for $G(A;T^*)$'' (for $I$, $II$ or either player) is $\Sigma_1$ over $\mathcal A$.
\end{lem}
\begin{proof}
	Let $\sigma\in N\in M$, with $N, M$ admissible. Then certainly being winning is expressible by the $\Pi_1$ formula:
	\[
		\forall x(x\in\plays{T^*}\rightarrow\sigma * x \in A).
	\]
	Note that checking $x\in A$ for a generalised-arithmetic $A$ is $\Delta_1$ over $M$ since it is definable over $\mathcal H$.

	Now we just need a $\Sigma_1$ formula. But note that the set $\{x\mid x*\sigma\in A\}$ is $\wt\Sigma^1_1$ and so by Theorem \ref{thm:generalised-kbt}, if non-empty, has an element definable over $N$, and hence in $M$, witnessing that $\sigma$ is not winning. Thus if $M$ is as required, the following sentence will do:
	\[
		\exists N(\Trans(N) \wedge N\vDash\KP \wedge T^*\in N \wedge \forall x\in\plays{T^*}\cap \operatorname{Def}(N)(\sigma * x\in A)).
	\]
	Again, everything inside the scope of the quantifier is $\Delta_1$, so we are done with the first assertion of the lemma. The second assertion then follows trivially.
\end{proof}

Now let's examine quasi-strategies:

\begin{lem}
	If $G(A;T^*)$ is not a win for $I$, then $II$'s non-losing quasi-strategy is a $\Pi_1$ definable element of $\mathcal A$.
\end{lem}
\begin{proof}
	``not being a win for $I$'' is a $\Pi^{\mathcal A}_1$ property by the previous lemma, and in this case, $II$'s non-losing quasi-strategy is $\Pi_1^{\mathcal A}$:
	\[
		T' := \{p\in T^*\mid \forall n\leq |p|(G(A;T^*_{p\upharpoonright n}) \text{ is not a win for $I$})\}.
	\]
	Being $\Pi_1$, $T'$ is $\Delta_0$ definable from its $\Sigma_1$ complement in $T^*$.
	Thus by $\Sigma_1$-Separation, $T'$ is an element of $\mathcal A$.
\end{proof}

We now proceed with Wolfe's proof, starting with the following Lemma:

\begin{lem}
	\label{lem-positions-for-closed-subset}
	Let $B\subseteq A$ with $B\in\wt\Pi^0_1$ and $A$ a generalised-arithmetic class of $\mathcal A$.
	If $I$ has no winning strategy for $G(A;T^*)$, then in $\mathcal{A}$ there is a strategy $\tau$ for $II$ such that for any play $x\in\plays\tau$, $x$ extends a position $p$ such that:
	\begin{enumerate}
		\item $\plays {T_p^*} \cap B=\varnothing$;
		\item $G(A;T_p^*)$ has no winning strategy for $I$.
	\end{enumerate}
\end{lem}

\begin{proof}
	Define $C$ to be the set of plays such that no initial move satisfies both the assertions of the lemma, that is:
	\[
		C:=\{x\in\plays T^*\mid \forall p\subseteq x (\plays{T^*_p}\cap B\neq\varnothing \vee G(A;T^*_p) \text{ is a win for $I$})\}.
	\]
	Being a win for $I$ is $\Sigma_1$ and the rest of the definition here is $\Delta_0$, so $C$ is $\Sigma_1$-definable over $\mathcal{A}$.
	To prove the lemma we just need a winning strategy for $II$ in the game $G(C;T^*)$, so suppose there isn't one.
	But $C$ is closed; it is the complement of the union of basic open sets and so $I$ must have a winning strategy, $\sigma$.
	Now, since $G(A;T^*)$ is not a win for $I$, by the previous lemma $II$'s non-losing quasi-strategy $T'$ is a $\Pi_1^{\mathcal A}$ element of $\mathcal A$.
	Hence $I$ has no winning strategy in $G(A;T')$.
	But since $T'$ doesn't restrict $I$'s moves, $\sigma$ restricted to $T'$ is still a winning strategy for $G(C;T')$.

	Denote $\sigma$'s restriction to $T'$ by $\sigma'$, and let $x$ be any play consistent with $\sigma'$.
	Then for every $p\in T'$, and hence for any $p\subseteq x$, the game $G(A;T_p^*)$ is not a win for $I$ since $T'$ is $II$'s non-losing quasi-strategy.
	Hence we have shown that the second condition above holds for every $p\subseteq x$.
	Since we assumed that not both are true, then, every $p\subseteq x$ satisfies $\plays{T^*_p} \cap B\neq\varnothing$.
	But $B$ is closed so this means it contains $x$.
	Hence $x\in B\subseteq A$, but $x$ was an arbitrary play consistent with $\sigma'$, so $\sigma'$ is winning for $G(A;T')$, which is a contradiction.
\end{proof}

The proof of the theorem now follows quite simply.

\begin{thm}
	$\mathcal{A}\vDash\Det(\wt\Sigma^0_2)$.
\end{thm}
\begin{proof}
	Let $A\in\wt\Sigma^0_2$, so that $A=\bigcup_{i\in\omega} A_i$ for closed $A_i$.
	Suppose $G(A;T^*)$ is not a win for $I$, and we will construct a winning strategy $\tau$ for $II$.
	The familiar idea is to build $\tau$ from countably many $\tau_i$, each given by applying the lemma with $B$ set to $A_i$.

	Let $\tau_0$ be the strategy given by the previous lemma for $B=A_0$, and let $p_0$ be the shortest position consistent with $\tau_0$ and $q^I$ satisfying both conditions stated in the lemma.
	We then simultaneously define $p_i$ and $\tau_i$ by induction, for as long as $p_i$ is shorter than $2n$.

	Firstly:
	\begin{align*}
		S(T_p^*) &= \parbox{7cm}{the least strategy $\tau$ that wins $G(C;T_p^*)$, with $C$ as defined in the previous lemma.}\\
		P(\tau_i,q^I) &= \parbox{7cm}{the shortest position $p\subseteq\tau_{i}*q^I$ satisfying both conditions of the previous lemma.}
	\end{align*}
	(Here we interpret $\tau_i*q^I$ for the finite sequence $q^I$ as being all plays compatible with $\tau_i$ where $I$ played from $q^I$ his first $n$ moves.)
	Then, if we have defined all strategies and positions up to $\tau_i,p_i$:
	\begin{align*}
		\tau_{i+1}(p) &= \begin{cases}
			\tau_{i}(p) &\text{if } p\subseteq p_i\\
			S(T_{p_i}^*) &\text{otherwise}
		\end{cases}\\
		p_{i+1} &= P(\tau_{i+1},q^I).
	\end{align*}

	Note that $p_{i+1}$ (and hence $\tau_{i+2}$) will be undefined if the lemma would produce a position longer than $2n$.

	Let $\tau(\varnothing)=\tau_0(\varnothing)$.
We define a winning strategy for $II$ by recursion on length of position.

	Suppose we have defined $\tau$ on all positions of length up to $2n$, and let $q$ be a position of length $2n+1$ compatible with $\tau$.
	Let $q^I=\langle q_0,q_2,\ldots,q_{2n}\rangle$, and use it to define $\tau_i$ and $p_i$ as above, with $k$ least such that $p_k$ is undefined.
	Then we define $\tau(q)=\tau_k(q)$.
	Note that $S$ and $P$ are $\Sigma_1$ and $\Delta_0$ over $\mathcal A$, respectively, so this amounts to a $\Sigma_1$ recursion, giving $\tau$ a $\Sigma_1$-definable element of $\mathcal A$, so it just remains to show that $\tau$ is winning.

	So let $x$ be consistent with $\tau$, so that we have positions $p_i\subseteq x$ for all $i\in\omega$, with $\plays{T^*_{p_i}}\cap A_i=\varnothing$.
	Hence $x\in\plays{T^*_{p_i}}$ for each $i$, and so $x\notin\bigcup_{i\in\omega}A_i=A$, so $II$ wins, and $\tau$ is winning.
\end{proof}

This gives us that, if $I$ does not have a winning strategy in $\mathcal A$, $II$ does, and in fact it is $\Sigma_1$ definable.
If $I$ does have the winning strategy, we can in fact find a $\Sigma_1$ definable one because, by Lemma \ref{lem:winning-is-delta-1}, ``being a winning strategy'' is $\Delta_1$ definable over $\mathcal A$ and thus the $\mathcal A$-least such strategy is $\Sigma_1^{\mathcal A}$.
\let\bibliography\oldbibliography
\let\section\oldsection
\let\subsection\oldsubsection

Thus we have proved Theorem \ref{thm:main} part 2.

\subsection{\texorpdfstring{$\Det(\wt\Sigma^0_3)$}{Det tilde-Sigma-0-3}}
\label{sub:Sigma^0_3}

\let\oldbibliography\bibliography
\def\bibliography#1{\relax}
\let\oldsection\section
\let\oldsubsection\subsection
\def\section#1{\let\section\subsubsection}
\let\subsection\subsubsection

\section{The Stuff}

Let $\mathcal T$ be the theory $\KP+\Sigma_2\hyp\mathsf{Sep}$, and $\mathcal{A}_\mathcal T[\vec c]$ the least transitive model of $\mathcal T$ containing $\vec c$ as an element.
For simplicity of notation let $\mathcal A$ be $\mathcal A_\mathcal T[\vec c]$ for some $\vec c$ including $\{\aleph_i\mid i\in\omega\}$.
The game tree $T^*$ is a $\Delta_0$ element of $\mathcal A$.
It is a standard fact that models of $\KP+\Sigma_2\hyp\mathsf{Sep}$ are limits of admissibles and more, so being a winning strategy is a $\Delta_1^\mathcal A$ predicate, and the non-losing quasi-strategy is $\Pi_1^\mathcal A$.

Let $A$ be a $\wt\Sigma^0_3$ subset of $\plays{T^*}$, and we will want to show that $\mathcal A\vDash ``G(A;T^*)$ is determined.''
We follow the specialised version of Davis' original proof as laid out in \cite{Welch11}.
This in turn uses Martin's version, as in \cite{MartinUP}, of the proof closely, but explicitly minimising the required strength.
We note that $\Sigma_2\hyp\mathsf{Sep}$ is more than is necessary to effect this proof, so our proof is simpler than that in \cite{Welch11}.

As with Martin's proof, we start with a lemma that is applied repeatedly to build up a strategy to prove the theorem.
We will have to ensure that the objects defined are elements of the structure $\mathcal A$, and must allow for the added complexity coming from the use of the generalised pointclass $\wt\Sigma^0_3$.

\begin{lem}
	Let $B\subseteq A$ be $\wt\Pi^0_2$ and $T$ a game subtree of $T^*$. Suppose $I$ has no winning strategy in $G(A;T)$, then $II$ has a quasi-strategy $\bar T$ such that:
	\begin{enumerate}
		\item $\plays {\bar T}\cap B=\varnothing$;
		\item $G(A;\bar T)$ is not a win for $I$.
	\end{enumerate}
\end{lem}

\begin{proof}
	Let $T'$ be $II$'s non-losing quasi-strategy, which is a $\Pi_1$-definable set in $\mathcal A$.
	Define a position $p$ to be \emph{good} if there is a quasi-strategy $\bar T\subseteq T'_p$ satisfying the above two properties.
	Goodness is a $\Sigma_2$ property, asserting the existence of a tree with properties 1. and 2., with 1.  being $\Delta_1$ and 2. being $\Pi_1$.
	The set of good positions $H$ is therefore an element of $\mathcal A$.

	Denote by $\hat T$ the function with domain $H$, defined over $\mathcal A$ as: $\hat T(p)$ is the least quasi-strategy witnessing that $p$ is good.
	This is $\Sigma_2$ definable by the existence of $\Sigma_2$ Skolem functions.

	Let $B=\bigcap_n D_n$, with each $D_n\in\wt\Sigma^0_1$, and define the sets:
	\[
		E_n= A\cup\{x\in\plays{T'}\mid \exists p\subseteq x(\plays{T'_p}\subseteq D_n\wedge p\text{ is not good})\}.
	\]
	Thus each $E_n$ is a $\Pi_2$ set in $\mathcal A$ once we make the $\exists p$ bounded in the usual way.

	The goal, then, is to prove that the initial position $\varnothing$ is good.
	This will be accomplished by showing that there is at least one $E_n$ for which $I$ has no winning strategy in $T'$.
	Following Martin, we first show that this does what we want, so assume that there is no winning strategy for $I$ in the game $G(E_n;T')$ in $\mathcal A$.
	Let $T''$ be $II$'s non-losing quasi-strategy in this game.

	We now define a quasi-strategy $\bar T$.
	$\bar T$ firstly contains all $p\in T''$ until $\plays{T'_p} \not\subseteq D_n$.
	Then letting $p$ be a minimal position such that $\plays{T'_p}\subseteq D_n$, note that $p$ must be good.
	This is because, if $p$ were bad the definition of $E_n$ implies $I$ has a trivial winning strategy in $G(E_n;T'_p)$.
	But $p$ is supposed to be in $II$'s non-losing quasi-strategy, and hence upon reaching such a $p$ we can include $\hat T(p)$ into $\bar T$.

	Now we wish to show that $\bar T$ is a witness in $\mathcal A$ to the initial position being good.
	Note firstly that $\plays{\bar T}\cap B=\varnothing$, because either a play through $\bar T$ remains in $T''$ (and so it is not in $D_n$, hence not in $B$) or it goes through $\hat T(p)$ which witnesses $p$'s goodness, hence avoiding $B$.

	We now need to show that $G(A;\bar T)$ is not a win for $I$, so suppose otherwise and let $\sigma\in \mathcal A$ witness that.
	Every position $p$ consistent with $\sigma$ satisfies $\plays{T'_p}\not\subseteq D_n$: if not, then there is such a $p$ with $\bar T_p=\hat T(p)$.
	But $\hat T(p)$ is a witness to $p$'s goodness in $\mathcal A$, so $G(A;\bar T_p)$ is not a win for $I$ in $\mathcal A$.

	By the definition of $\bar T$, then, all plays consistent with $\sigma$ lie in $\plays{T''}$.
	Hence $\sigma$ is also a winning strategy for $G(A;T'')$ and, since $A\subseteq E_n$, for $G(E_n;T'')$.
	But this allows us the contradiction we want: Define a strategy $\tau$ for $I$ in $G(E_n;T')$ by following $\sigma$ until possibly reaching a point $p\notin T''$.
	Then there is a strategy $\sigma_p$ winning for $I$ in $\mathcal A$ (since $T'$ is non-losing in $\mathcal A$), so let $\tau$ follow this strategy.
	This procedure is definable from $\sigma$ and the strategies $\sigma_p$, all of which are elements of $\mathcal A$, and thus the strategy is in $\mathcal A$ for the desired contradiction.

	We have now shown that, under the assumption that at least one game $G(E_n;T')$ is not a win for $I$, we have that $\varnothing$ is good as desired, and we need to show that this assumption is valid.
	Note that the above argument can be adapted to show that, if $G(E_n^p;T'_p)$ is not a win for $I$, where:
	\[
		E_n^p := A\cup\{x\in\plays{T'_p}\mid\exists q\subseteq x(p\subseteq q\wedge\plays{T'_q}\subseteq D_n\wedge\text{$q$ is not good})\}
	\]
	then $p$ is good.
	Hence suppose the lemma is false.
	We can thus assume that each $G(E_n^p;T'_p)$ is a win for $I$ (in $\mathcal A$) since this is the only case in which we haven't proved the lemma.
	We then show that $I$ has a winning strategy in $G(A;T')$ in $\mathcal A$, contradicting the hypothesis of the lemma.

	$I$ starts by playing with the strategy from the previous lemma for $G(E_{0};T')$.
	If we never reach a point $p_1$ with $\plays{T'_{p_1}}\subseteq D_0$ then, if the play ends up in $E_{0}$ it must be in $A$.
	Otherwise all plays above $p_1$ are in $D_0$, and $I$ now must start playing according to the strategy for $G(E_1^{p_1};T'_{p_1})$.
	We now either end up in $A$ or use the strategy for $G(E_2^{p_2};T'_{p_2})$.
	If this process terminates, then $I$ has a winning strategy for some $G(E_i^{p_i};T'_{p_i})$ and the only way for $I$ to win in the game would be to get into $A$, giving a winning strategy in $G(A;T')$.
	If the process continues indefinitely, then $I$ has landed in each $D_n$ and hence is in $B\subseteq A$.

	It remains to show that the strategy thus defined is an element of $\mathcal A$, so let's make our definitions above precise.
	Suppose $q$ is an even-length position with $q\supseteq p_i$, and that we have defined $p_i$ and $\sigma_i$, but not $p_{i+1}$ and $\sigma_{i+1}$.
	\begin{align*}
		\sigma(q)	&=
		\begin{cases}
			\sigma_i(q) & \text{if $\plays{T'_q}\not\subseteq D_i$}\\
			\sigma_{i+1}(q) & \text{otherwise}
		\end{cases}\\
		\intertext{where, if ``otherwise,'' we set:}
		\sigma_{i+1}	&= \text{$I$'s least winning strategy in $G(E_i^q;T'_q)$}.
	\end{align*}
	Thus $\sigma$ is defined by a recursion on $\omega$ and is an element of $\mathcal A$.
	But $T'$ is $I$'s non-losing quasi-strategy in $\mathcal A$ and so this is a contradiction.

\end{proof}

\begin{thm}
	For any $\wt\Sigma^0_3$ subset $A$ of $\plays{T^*}$, $\mathcal A\vDash\text{``$G(A;T^*)$ is determined''}$.
\end{thm}

\begin{proof}
	Now we are ready to prove the theorem.
	Let $A=\bigcup_{n\in\omega} A_n$, with each $A_n$ $\wt\Pi^0_2$, and suppose $G(A;T^*)$ is not a win for $I$, and we show that $II$ has a winning strategy definable by a $\Sigma_2$-recursion over $\mathcal A$ which, by $\Sigma_2$-admissibility is therefore an element of $\mathcal A$.
	We apply the previous lemma repeatedly, substituting each $A_n$ as an instance of $B$, and the resulting quasi-strategies as instances of $T$.

	First apply the Lemma with $B=A_0$ and $T^*$, yielding a quasi-strategy $\bar T^*\in \mathcal A$, which we shall now call $T^\varnothing$.
	Taking $T^\varnothing$ to be the least such gives $T^\varnothing$ a $\Sigma_2$ element of $\mathcal A$.

	The lemma tells us that $G(A;T^\varnothing)$ is not a win for $I$, so for any length-1 position $p_1\in T^*$, let $\tau(p_1)$ be some arbitrary, fixed move in $II$'s non-losing quasi-strategy.
	Now let $p_2$ be a length-2 move consistent with $\tau$ as defined so far and apply the Lemma with $B=A_1$ and $T=(T^\varnothing)_{p_2}$, yielding a quasi-strategy which we call $T^{p_2}$, $\Sigma_2$-definable from $(T^\varnothing)_{p_2}$ and hence an element of $\mathcal A$.
	Since the Lemma guarantees $I$ still has no winning strategy in $G(A;T^*_{p_2})$, for an arbitrary length-3 position $p_3$, let $\tau(p_3)$ be an arbitrary move consistent with $II$'s non-losing quasi-strategy there.

	Continuing in this way defines a strategy $\tau$ for $II$ by $\Sigma_2$-recursion, and as noted above, this gives $\tau\in\mathcal A$.
	All that remains is to show that $\tau$ is winning.
	If $x$ is consistent with $\tau$ then by construction it is in each $\plays{T^{p_{2n}}}$, and $\plays{T^{p_{2n}}}\cap B_n=\varnothing$ by the lemma, so $\forall n\in\omega(x\notin B_n)$, so $x\notin A$ and is a win for $II$.
\end{proof}
\let\bibliography\oldbibliography
\let\section\oldsection
\let\subsection\oldsubsection

Thus we have proved Theorem \ref{thm:main} part 3.

\subsection{\texorpdfstring{$\Det(n\hyp\wt\Pi^0_3)$}{Det n-tilde-Pi-0-3}}
\label{sub:n-Pi^0_3}

To prove this, we refer to the work of Montalb\'an and Shore, \cite{MontalbanShore2012}, Section 4, which in turn is a version of Martin's proof in Section 1.4 of \cite{MartinUP}.
Since that proof is long, needs very little modification and we have already seen how the arguments transfer from the ordinary case to determinacy of these auxiliary games, we will be brief.

Working in a minimal model $\mathcal{A}$ with $T^*\in \mathcal A$ and satisfying $\KP_{n+1}$, we have to ensure that all the objects defined in the proof of \cite{MontalbanShore2012} exist in $\mathcal A$.

First, let $A$ be as follows:
\begin{alignat*}{3}
	A &&& \text{ is $m\hyp\wt\Pi^0_3$ as witnessed by $\langle A_i\mid i\leq m\rangle$, a descending sequence of
		$\wt\Pi^0_3$ sets};\\
	A_i &=&& \bigcap_{j\in\omega} A_{i,j} \text{ for $\wt\Sigma^0_2$ sets $A_{i,j}$};\\
	A_{i,j} &=&& \bigcup_{k\in\omega} A_{i,j,k} \text{ for $\wt\Pi^0_1$ sets $A_{i,j,k}$}.
\end{alignat*}

Let $s$ be a position in $T^*$ of length at most $m$, and $S$ a subtree of $T^*$. Let $l=m-|s|$ and fix player $x$ to be $I$ if $l$ is even and $II$ otherwise, with $\bar x$ the opposite player.
Then let $B^x$ be $B$ if $x=I$ and $B^c$ otherwise, for any set $B$ (taking complementation inside the ambient space, which is $\plays {T^*}$ in the case of $A^x$, but when considering the winning set of a game in the tree $S$, it is in $\plays S$).
We define the predicates $P^s(S)$ by recursion up to $m$, exactly as in \cite{MontalbanShore2012}:
\begin{dfn}
	\label{dfn:P}~

	$P^\varnothing(S)$ holds iff there is a winning strategy for player $x$ in $G(A;S)$.

	$P^s(S)$ holds for $|s|=n+1$ iff there is a quasistrategy $U\subseteq S$ for player $x$ such that
	\begin{enumerate}
		\item $\plays U\subseteq A^x\cup A_{(m-n-1),s(n)}$; and
		\item $P^{s\upharpoonright n}(U)$ fails.
	\end{enumerate}
\end{dfn}

\begin{dfn}
	~

	A quasistrategy $U$ \emph{witnesses} that $P^s(S)$ if it is as required in Definition \ref{dfn:P}.

	A quasistrategy $U$ \emph{locally witnesses} that $P^s(S)$ if either $s=\varnothing$ and $U$ is a witness to $P^\varnothing(S)$, or $|s| > 0$, $U$ is a quasistrategy for player $x$ and there is a $D\subseteq S$ such that for every position $d\in D$, there is a quasistrategy $R^d\subseteq S_d$ for $\bar x$ such that:
	\begin{enumerate}
		\item $\forall d\in D\cap U(U_d\cap R^d \text{ witnesses that } P^s(R^d))$
		\item $\plays{U}\setminus\bigcup_{d\in D} \plays{R^d} \subseteq A^x$
		\item For a position $p\in S$ there is at most one position $d\in D$ such that $d\subseteq p \wedge p\in R^d$.
	\end{enumerate}
\end{dfn}

These are technical conditions that are used in the proof to ultimately ensure determinacy.
Many sets are defined using these conditions, so we will need to know the complexity, and thus that the sets are elements of $\mathcal A$.

\begin{prop}
	\label{prop:properties-complexities}
	~
	\begin{enumerate}
		\item $P^s(S)$ is $\Sigma_{|s|+1}$
		\item ``$U$ witnesses that $P^s(S)$ holds'' is $\Pi_{|s|}$
		\item ``$U$ locally witnesses that $P^s(S)$ holds'' is $\Sigma_{|s|+1}$.
	\end{enumerate}
\end{prop}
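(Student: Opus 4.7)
The plan is a simultaneous induction on $|s|$ for the three clauses, arranged so that (2) at level $|s|$ feeds into (1) at level $|s|$ (prepending an existential over the witness $U$) and into (3) at level $|s|$ (prepending existentials over $D$ and $\langle R^d : d \in D\rangle$), while (2) at level $|s|$ itself is built from a bounded ``matrix'' condition together with $\neg P^{s \upharpoonright (|s|-1)}(\cdot)$, whose complexity is handed to us by (1) at level $|s|-1$.

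At the base case $|s|=0$, $P^\varnothing(S)$ asserts the existence of a winning quasi-strategy for $x$ in $G(A;S)$, which is a single $\Sigma_1$-existential over a matrix (``$\sigma$ wins against $A$'') that is bounded given the named payoff $A$, since the $\wt\Pi^0_3$/$\wt\Sigma^0_2$/$\wt\Pi^0_1$ defining data of $A$ all live in $\mathcal A$ and unfold absolutely in the framework of Section \ref{sub:Effective Descriptive Set Theory in Uncountable Spaces}. Witnessing and local witnessing at $|s|=0$ reduce to this same bounded matrix, giving complexity $\Pi_0 = \Pi_{|s|}$.

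For the inductive step with $|s|=n+1$, clause (2) says ``$U$ witnesses $P^s(S)$'' is the conjunction of $\plays{U} \subseteq A^x \cup A_{(m-n-1), s(n)}$ (bounded given the fixed parameters of $A$) with $\neg P^{s \upharpoonright n}(U)$, and by induction on (1) the latter is $\neg\Sigma_{n+1} = \Pi_{n+1} = \Pi_{|s|}$, whence the conjunction is $\Pi_{|s|}$. Clause (1) then follows at once: $P^s(S) \equiv \exists U\,(U$ witnesses $P^s(S))$ is $\Sigma_{|s|+1}$. For clause (3), unwinding the definition of local witnessing gives
\[
  \exists D \subseteq S\ \exists \langle R^d : d \in D \rangle \ \bigl[\, \forall d \in D \cap U\,(U_d \cap R^d \text{ witnesses } P^s(R^d)) \ \wedge\ (\mathrm{b}) \ \wedge\ (\mathrm{c}) \,\bigr],
\]
whose matrix is $\Pi_{|s|}$ by the clause-(2) calculation just completed and whose leading existential block contracts to a single $\Sigma_1$-quantifier via $\Sigma_{n+1}$-collection in $\mathcal A$, giving $\Sigma_{|s|+1}$ overall.

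The delicate point is the bookkeeping: one has to verify that the subtrees, quasistrategies and sequences $\langle R^d \rangle$ being existentially quantified genuinely live in $\mathcal A$, and that the steps advertised as ``bounded'' really are bounded in the uncountable auxiliary tree $T^*$, so that the recursion does not creep up the L\'evy hierarchy beyond the claimed levels. This is precisely what the effective descriptive set theory developed in Section \ref{sub:Effective Descriptive Set Theory in Uncountable Spaces} is designed to provide, and together with $\Sigma_{n+1}$-collection in $\mathcal A \models \KP_{n+1}$ it underwrites every absoluteness and quantifier-contraction step used above.
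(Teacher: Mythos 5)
Your proof is correct and follows essentially the same route as the paper's: an induction on $|s|$ whose base case is the $\Sigma_1$-definability over $\mathcal A$ of ``there is a winning strategy for $x$ in $G(A;S)$'' and whose inductive step just counts the quantifier contributed by $\exists U\,\neg P^{s\upharpoonright n}(U)$. The paper dispatches clauses (2) and (3) with ``the complexities of the other predicates follow straightforwardly''; your write-up simply supplies those bookkeeping details explicitly.
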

\begin{proof}
	Since ``there is a winning strategy for $x$ in $G(A;S)$'' is $\Sigma_1$ over $\mathcal A$ (recall Lemma \ref{lem:winning-is-delta-1}), $P^\varnothing(S)$ is $\Sigma_1$.
	Then if $|s|=n+1$, the definition requires checking that there is a $U$ such that $\neg P^{s\upharpoonright n}(U)$, so inductively the predicate overall is $\Sigma_{|s|+1}$.

	The complexities of the other predicates follow straightforwardly as well.
\end{proof}

Contrast this with Remark 4.3 of \cite{MontalbanShore2012}, where the same concepts are noted to be $\Sigma^1_{|s|+2}$, $\Pi^1_{|s|+1}$ and $\Sigma^1_{|s|+2}$, respectively; the Kleene-Basis theorem allows us to eliminate a quantifier in the base case.

With this observation, the remainder of Montalb\'an and Shore's proof can be followed in $\mathcal A$.
Note that, whenever we define a set (of plays, or of positions) it is, in their proof, at most $\Sigma^1_{n+2}$ using their Remark 4.3.
Consequently by our Proposition \ref{prop:properties-complexities} the same sets are at most $\Sigma^1_{n+1}$, indeed whenever their set is $\Sigma_{m+1}$, ours is $\Sigma_m$.
When it comes to define a function returning such sets, we use the fact that the $\mathcal A$-least set satisfying a $\Sigma_m$ property is again $\Sigma_m$.

Following their proof with these minor modifications, then, gives us:
\begin{thm}
	$\mathcal A\vDash\Det(n\hyp\wt\Pi^0_3)$.
\end{thm}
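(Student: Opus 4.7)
The plan is to take the Montalb\'an--Shore argument from \cite{MontalbanShore2012} essentially verbatim and verify, using Proposition \ref{prop:properties-complexities}, that every object it constructs lives inside $\mathcal{A}$. The argument proceeds by contradiction: assume towards a contradiction that there is no winning (quasi)strategy for $x$ in $G(A; T^*)$, i.e.\ that $P^{\varnothing}(T^*)$ fails. One then uses the recursive clauses of Definition \ref{dfn:P} to build, at each level $k \leq m$, quasistrategies locally witnessing $P^s$ for strings $s$ of length $k$, and eventually derives a contradiction by extracting an infinite branch from $A^x \cap \bigcap_i A_i^{\bar x}$, which is impossible since the $A_i$ are descending with $A_m = A$.

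First I would set up the ambient model $\mathcal{A}$ as the smallest transitive model of $\KP_{n+1} + \Sigma_{n+1}\text{-}\sep$ containing $T^*$ (the auxiliary tree from Section \ref{sec:Determinacy from Indiscernibility}) and the enumeration of the $\wt\Pi^0_1$ sets $A_{i,j,k}$ used to witness that $A$ is $m\text{-}\wt\Pi^0_3$. I would then carry out the recursion defining $P^s(S)$ and the predicates ``$U$ witnesses $P^s(S)$'' and ``$U$ locally witnesses $P^s(S)$'' as in Definition \ref{dfn:P}, checking at each step that the required quasistrategies exist as elements of $\mathcal{A}$. The central move is: whenever Montalb\'an--Shore appeal to a set of positions or plays of complexity $\Sigma^1_{m+1}$ to invoke $\Pi^1_m$-comprehension or a $\Pi^1_m$-basis principle, Proposition \ref{prop:properties-complexities} pushes the corresponding set in our setting down to $\Sigma_m$ over $\mathcal{A}$, so that $\Sigma_m$-separation (available from $\KP_{n+1} + \Sigma_{n+1}\text{-}\sep$ for all $m \leq n+1$) delivers the set. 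For the function-valued constructions -- where they pick, for each position $p$, a quasistrategy witnessing some property -- I would use the standard $\Sigma_m$ Skolem function described in the Preliminaries (``the $\mathcal{A}$-least set satisfying $\varphi$'' is $\Sigma_m$ for $\Sigma_m$ formulas $\varphi$) together with $\Sigma_m$-collection to turn pointwise existence into the existence of a single set.

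The main obstacle will be bookkeeping: verifying, line by line, that every set and function Montalb\'an--Shore define sits at a complexity level at which $\KP_{n+1}$ plus $\Sigma_{n+1}$-separation can see it, and that nothing in their extraction of the final contradictory branch requires more than these resources. In particular one must check that the induction on $k$ producing locally witnessing quasistrategies respects the $\Sigma_{k+1}$ complexity bound of Proposition \ref{prop:properties-complexities}(3) uniformly, so that the outermost use at level $k = m \leq n+1$ still falls within what $\mathcal{A}$ can manipulate. The other delicate point is the final step, where assuming $\neg P^\varnothing(T^*)$ together with locally witnessing quasistrategies at every level forces an inconsistent configuration: here one should verify that the diagonal play produced lies in $\mathcal{A}$ (or at least that its defining properties can be formalised there), which follows because plays of the auxiliary game are handled by the uncountable effective descriptive set theory of Section \ref{sub:Effective Descriptive Set Theory in Uncountable Spaces}.

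Finally, having established $P^\varnothing(T^*)$, the winning quasistrategy it supplies can, by Lemma \ref{lem:winning-is-delta-1} and the fact that ``$P^\varnothing(T^*)$ holds'' is $\Sigma_1$ over $\mathcal{A}$, be chosen $\Sigma_1$-definably in $\mathcal{A}$. This is exactly what Theorem \ref{thm:ind-to-det} needs to conclude $\Det(\omega^2\text{-}\ca + n\text{-}\Pi^0_3)$ from the generating indiscernibles obtained in Section \ref{sec:Obtaining Indiscernibles}, completing part 4 of Theorem \ref{thm:main}.
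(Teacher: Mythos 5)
Your proposal is correct and takes essentially the same route as the paper: both treat the Montalb\'an--Shore argument as a black box and reduce the work to the complexity calculation of Proposition~\ref{prop:properties-complexities}, using least-witness Skolem functions to realise the function-valued constructions inside $\mathcal{A}$. The extra detail you give about the structure of their induction and the final contradiction is consistent with what the paper silently relies on.
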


So we have proved Theorem \ref{thm:main}, part 4.

\subsection{\texorpdfstring{$\Det(\wt\Sigma^0_\alpha)$}{Det tilde-Sigma-0-alpha}}

For the final part of Theorem \ref{thm:main} we need to make a minor modification to the picture; Section \ref{sub:Effective Descriptive Set Theory in Uncountable Spaces} and Lemma \ref{lem:winning-set-complexity} only deal with sets of finite Borel rank.

\let\oldbibliography\bibliography
\def\bibliography#1{\relax}
\let\oldsection\section
\let\oldsubsection\subsection
\def\section#1{\let\section\subsubsection}
\let\subsection\subsubsection
\section{Setting}

We will first deal with the simpler case of part 6 of the main theorem, so let $\mathcal A$ be some $\mathcal A_\ZFC[\vec c]$, with $\vec c\supseteq\{\aleph_i\mid i\in\omega\}$.

\begin{dfn}
	Fix some $\Delta_1^{\mathcal H}$ encoding $p:\aleph_\omega\to T^*$ and let $A\subseteq \plays{T^*}$.

	\begin{enumerate}
		\item $A$ is $\wt\Sigma^0_1$ iff there is a partial $\Sigma_1(\mathcal H)$ function $e:\aleph_\omega\to\aleph_\omega$ where $A=\bigcup_\alpha\plays{T^*_{p(e(\alpha))}}$. The code for $A$ is then $\langle 0, e\rangle$, having coded $e$ as an element of $\aleph_\omega$.
			This is possible because there are only $\aleph_\omega$-many such functions that are $\Sigma_1(\mathcal H)$ definable with parameters from $\mathcal H$.
		\item $A$ is $\wt\Sigma^0_\alpha$ iff there is a partial $\Sigma_1(\mathcal H)$ function $e:\omega\to \aleph_\omega$ such that $e(a)$ is a code for a $\wt\Pi^0_{\beta}$ set for some $\beta<\alpha$ and $A=\bigcup_a e(a)$. The code for $A$ is then $\langle 1, e\rangle$.
		\item $A$ is $\wt\Pi^0_\alpha$ iff there is a $\wt\Sigma^0_\alpha$ set such that $A=B^c$. If $B$ has code $c$ then the code for $A$ is $\langle 2, c\rangle$.
	\end{enumerate}

\end{dfn}

\begin{thm}
	For $\alpha<\omega$, this definition of $\wt\Sigma^0_\alpha$ agrees with the old definition.
\end{thm}
\begin{proof}
	It is clear that the two definitions of $\wt\Sigma^0_1$ are equivalent:
	if we know the $\Sigma_1$ set $X$ of the original definition, then setting $e(\alpha)=\alpha \iff p(\alpha)\in X$ and leaving $e$ undefined otherwise, we have that $e$ is $\Sigma_1(\mathcal H)$ and the sets defined are the same.
	If we know $e$, and $e(\alpha)=\beta$, put $p(\beta)\in X$. 

	Now assume that we have shown the two definitions of $\wt\Sigma^0_n$ agree.
	Then if $A=\{x^*\mid\exists a R(x^*,a)\}$ for some $\wt\Pi^0_n$ predicate $R$, we take $e(a)$ to be (the code for) $\{x^*\mid R(x^*,a)\}$ and $A=\bigcup_a e(a)$.
	Noting that obtaining the codes for $R(\cdot, a)$ is $\Delta_1^\mathcal H$, we have that $e$ is a $\Delta_1^\mathcal H$ map, satisfying the requirements for the new definition.

	On the other hand if $A=\bigcup_a e(a)$ is $\wt\Sigma^0_{n+1}$ with the above definition we may assume that the relation $R(x^*,a)$ given by $x^*\in e(a)$ is $\wt\Pi^0_n$, and set $x^*\in A\iff \exists a(R(x^*,a))$.
\end{proof}

Thus if we fix $A^*\subseteq[T^*]$ to be $\wt\Sigma^0_\alpha$, we have $A^*\in\mathcal A$.
Furthermore, by the way it is defined, such sets are all within $\mathbf\Sigma^0_\alpha$.
Hence by the Borel Determinacy Theorem, $\mathcal A\vDash\Det(\wt\Sigma^0_\alpha)$.
By slightly altering Lemma \ref{lem:winning-set-complexity} we can see that, if $A\in\Sigma^0_\alpha$ then the corresponding winning set for the auxiliary game is $\wt\Sigma^0_\alpha$, and then the rest of the machinery of Theorem \ref{thm:ind-to-det} works as before.

For part 5 of the main theorem we just need to modify things slightly in light of Martin's level-by-level analysis of the determinacy of the Borel sets: 
\begin{thm}[\cite{MartinUP}, 2.3.5]
	\label{thm:borel-levels-det}
	Let $M$ be a model of $\mathsf{ZC}^- + \Sigma_1\hyp\mathsf{Replacement}$, and suppose $\mathcal{P}^\alpha(T)\in M$.
	Then, if $\alpha$ is finite, $M\vDash$``all $\mathbf\Delta^0_{\alpha+4}$ games in $T$ are determined''.
	If $\alpha$ is an infinite countable ordinal of $M$ then $M\vDash$``all $\mathbf\Delta^0_{\alpha+3}$ games are determined''.
\end{thm}
(Note that here we mean $\mathbf\Delta^0_\alpha$ in the sense of $M$, so while this may not be all of the true $\mathbf\Delta^0_\alpha$ sets we know that in our situation, with $M$ transitive, it will be all of the $\wt\Delta^0_\alpha$ sets.)

Thus we will want $\mathcal A$ to be a model of $\mathsf{ZC}^-+\Sigma_1\hyp\mathsf{Replacement}+\mathcal{P}^\alpha(T^*)$ exists (where $T^*$ is the auxiliary game tree, $(\omega\times\aleph_\omega)^{<\omega}$).

Now consider the argument again, starting with a model $M$ with measurable cardinal $\kappa$, of $\ZFC^-+\mathcal{P}^\alpha(\kappa)$ exists.
We iterate this model as in Section \ref{sec:Obtaining Indiscernibles} to obtain $M_\lambda$, which is elementarily equivalent to $M$, and then force to produce models $\mathcal A[\vec c]$.
By elementarity, $M_\lambda\vDash\ZFC^-+\mathcal{P}^\alpha(\kappa_\lambda)$ exists, and so by the usual argument for showing that the Power Set axiom holds in generic extensions, so does $M_\lambda[\vec c]$.
Thus all the models $\mathcal A[\vec c]=M_\lambda[\vec c]$ which we use in the arguments of Section \ref{sec:Determinacy from Indiscernibility} see that $\mathcal P^\alpha(T^*)$ exists, since in that case $\kappa_\lambda=\aleph_\omega$, hence the hypotheses of Theorem \ref{thm:borel-levels-det} are satisfied when it comes to proving determinacy of the auxiliary game.
\let\bibliography\oldbibliography
\let\section\oldsection
\let\subsection\oldsubsection

We have thus completed the proof Theorem \ref{thm:main}.

\section{Conclusions and Open Questions}
\label{sec:Conclusions and Open Questions}

The main result shows the adaptability of the technique originally invented by Martin for proving determinacy results on $\alpha\hyp\mathbf\Pi^1_1$.
The basic results of Section \ref{sub:Dual Classes in the Difference Hierarchy} show that we automatically get the determinacy of $\os+\Pi^0_2$ from $\skp$ (and so on) as we would expect.
In this way we have established new upper-bounds on the consistency strength of determinacy of classes strictly between $\os$ and $(\omega^2+1)\hyp\Pi^1_1$.

Open questions are of two kinds; the first is whether the hypotheses used here are minimal and the second is what hypotheses can we find that prove the determinacy of similar pointclasses.
To the second question, we have essentially exhausted the possibilities for extending \os{} by one more set, but we could investigate the modifications to the difference hierarchy in \cite{DuBose90} to see if more results are provable.

To the first question, we should not expect optimality in parts 1--3 of \ref{thm:main}, since for those we either know that the determinacy proof holds in models weaker than the forcing extensions we consider, or have no reason to suspect otherwise.
The problem is that we cannot preserve arbitrary theories when iterating and doing the forcing in Section \ref{sec:Obtaining Indiscernibles}, so even where we have optimal determinacy results (for instance, it is known that $\Det(\Sigma^0_2)$ is equivalent to closure under $\Sigma^1_1$-monotone inductive definitions) we cannot necessarily transfer them.

In the case of part 4, Montalb\'an and Shore show in \cite{MontalbanShore2012} that we cannot even find an exact characterisation of Determinacy down in the Borel hierarchy, so we should not hope to find any in the refined difference hierarchy.
However, we may rather hope to prove, as they did, that no exact correspondence can be found.

On the other hand we know that Borel determinacy requires almost the full strength of \ZFC, with Martin's analysis showing that we should expect parts 5 and 6 to be nearly optimal.

\bibliography{bibliography}
\end{document}